\DeclareFontFamily{U}{tipa}{}
\DeclareFontShape{U}{tipa}{bx}{n}{<->tipabx10}{}
\newcommand{\arc@char}{{\usefont{U}{tipa}{bx}{n}\symbol{62}}}%
\newcommand{\arc}[1]{\mathpalette\arc@arc{#1}}
\newcommand{\arc@arc}[2]{%
	\sbox0{$\m@th#1#2$}%
	\vbox{
		\hbox{\resizebox{\wd0}{\height}{\arc@char}}
		\nointerlineskip
		\box0
	}%
}
\newcommand{\doublewedge}{\big@doubleop{\wedge}}
\newcommand{\big@doubleop}[1]{%
	\DOTSB\mathop{\mathpalette\big@doubleop@aux{#1}}\slimits@
}
\newcommand\big@doubleop@aux[2]{%
	\sbox\z@{$\m@th#1#2$}%
	\makebox[1.35\wd\z@][s]{$\m@th#1#2\hss#2$}%
}
\newcommand{\abs}[1]{\left|#1\right|}     
\newcommand{\cl}{\mbox{c$\ell$}}  
\newcommand{\Int}{\mbox{int}} 
\newcommand{\bdy}{\mbox{bdy}} 
\newcommand{\hcyc}{\mbox{hCyc}} 
\newcommand{\Hcyc}{\mbox{HCyc}} 
\newcommand{\hSys}{\mbox{hSys}} 
\newcommand{\near}{\delta} 
\newcommand{\dcap}{\mathop{\cap}\limits_{\Phi}} 
\newcommand{\dnear}{\delta_{\Phi}} 
\newcommand{\Hquad}{\hspace{0.25em}} 
\newcommand{\Hb}{\mbox{Hb}} 
\renewcommand{\thesubfigure}{\thefigure.\arabic{subfigure}}
\renewcommand{\p@subfigure}{}
\renewcommand{\@thesubfigure}{\thesubfigure:\hskip\subfiglabelskip}
\theoremstyle{plain}
\newtheorem{theorem}{Theorem}
\newtheorem{proposition}{Proposition}
\newtheorem{lemma}{Lemma}
\newtheorem{remark}{Remark}
\newtheorem{definition}{Definition}
\newtheorem{example}{Example}
\newtheorem{corollary}{Corollary}
\begin{document}

\title[Good Coverings]{Good Coverings of Proximal Alexandrov Spaces.\\
Homotopic Cycles in Jordan Curve Theorem Extension.
}

\author[J.F. Peters]{J.F. Peters}
\address{
Computational Intelligence Laboratory,
University of Manitoba, WPG, MB, R3T 5V6, Canada and
Department of Mathematics, Faculty of Arts and Sciences, Ad\.{i}yaman University, 02040 Ad\.{i}yaman, Turkey,
}
\email{james.peters3@umanitoba.ca}
\thanks{The research has been supported by the Natural Sciences \&
Engineering Research Council of Canada (NSERC) discovery grant 185986 
and Instituto Nazionale di Alta Matematica (INdAM) Francesco Severi, Gruppo Nazionale per le Strutture Algebriche, Geometriche e Loro Applicazioni grant 9 920160 000362, n.prot U 2016/000036 and Scientific and Technological Research Council of Turkey (T\"{U}B\.{I}TAK) Scientific Human
Resources Development (BIDEB) under grant no: 2221-1059B211301223.}
\author[T. Vergili]{T. Vergili}
\address{
Department of Mathematics, Karadeniz Technical University, Trabzon, Turkey,
}
\email{tane.vergili@ktu.edu.tr}

\subjclass[2010]{54E05; 55P57}

\date{}

\dedicatory{Dedicated to Camille Jordan}

\begin{abstract}
This paper introduces proximal homotopic cycles, which lead to the main results in this paper, namely, extensions of the Mitsuishi-Yamaguchi Good Coverning Theorem with
different forms of Tanaka good cover of an Alexandrov space equipped with a proximity relation and extensions of the Jordan curve theorem.  An application of these results is also given. 
\end{abstract}

\keywords{Good Cover, Homotopy, Nerve, Path, Proximity. 
}

\maketitle
\tableofcontents

\section{Introduction}
This paper introduces proximal homotopic cycles considered in terms of a Tanaka good covering of an Alexandrov space~\cite{Tanaka2021TiAgoodCover}, leading to extensions of the Mitsuishi-Yamaguchi Good Coverning Theorem~\cite{MitsuisheYamaguchi209goodCover} as well as extensions of the Jordan Curve Theorem~\cite{Jordan1893coursAnalyse}.

This paper considers the homotopy of paths~\cite[\S 2.1,p.11]{Switzer2002CWcomplex} in \v{C}ech proximity spaces~\cite[\S 2.5,p 439]{Cech1966} in which nonvoid sets are spatially close provided the sets have nonempty intersection and in descriptive proximity spaces~\cite{Peters2019vortexNerves} in which nonvoid sets are descriptively close, provided the sets have the same descriptions.  A biproduct of this work is the extension of recent forms of good coverings of topological spaces~\cite{Tanaka2021TiAgoodCover}~\cite{MitsuisheYamaguchi209goodCover} as well as a fivefold extension of the Jordan curve theorem~\cite{Jordan1893coursAnalyse}.

\vspace*{0.1cm}

The main results of this paper are

{\bf Theorem}({\em cf.} Theorem~\ref{theorem:desGoodCover}).
For every descriptive proximity space $M$ on a finite collection of intersecting homotopic cycles,
\begin{description}
\item [(1)] $M$ has a good cover.
\item [(2)] The nerve of $M$ and the union of the sets in $M$ have the same homotopy type.
\end{description}

\vspace*{0.1cm}

{\bf Theorem}({\em cf.} Theorem~\ref{theorem:proximalJordan}).
Every finite collection of intersecting homotopic cycles in a proximity space $M$ satisfies the Jordan curve theorem.

\vspace*{0.1cm}

\section{Preliminaries}
This section introduces notation and basic concepts underlying proximal homotopy. \\  

Let $I = [0,1]$, the unit interval.  A \emph{\bf path} in a space $X$ is a continuous map $h:I\to X$ with endpoints $h(0)=x_0$ and $h(1)=x_1$~\cite[\S 2.1,p.11]{Switzer2002CWcomplex}.  A \emph{\bf homotopy of paths} $h,h': I\to X$ with fixed end points (denoted by $h\sim h'$), is a relation between $h$ and $h'$ defined by an associated continuous map $H: I \times I\to X$, where $H(s,t) = h_t(s)$ with $H(s,0) = h(s)$ and $H(s,1) = h'(s)$.  In effect, in a homotopy of paths $h,h'$, path $h$ is continuously transformed into path $h'$.  For $h\sim h'$, paths $h,h'$ are said to be {\bf homotopic paths}.

From the \v{C}ech proximity $\near$ in \ref{ap:Cech}, we can consider the closeness of homotopy classes in a proximity space $(X,\near)$.  

%

\subsection{Proximally Continuous Maps and Gluing Lemma} 
This section introduces gluing lemma for proximity spaces, defined via proximally continuous maps over a pair of \v{C}ech proximity spaces defined in terms of the proximity ${\near}$ (see ~\ref{ap:Cech}).  

\begin{definition} {\rm [Proximally continuous map]~\cite[p. 5]{Smirnov1952},\cite{Efremovic1952}.}\\ 
	A map $f: (X,\delta_1) \to (Y, \delta_2)$ between two proximity spaces is proximally continuous, provided $f$ preserves proximity, {\em i.e.}, $A \ \delta_1 \ B$ implies $f(A) \ \delta_2 \  f(B)$ for $A, B \in 2^X$. 
	\quad\textcolor{blue}{\Squaresteel}
\end{definition}

\vspace*{0.1cm}

\begin{remark}
	Proximally continuous maps were introduced by V.A. Efremovi\v{c}~\cite{Efremovic1952} and Yu. M. Smirnov~\cite{Smirnov1952,Smirnov1952a} in 1952 and elaborated by S.A. Naimpally and B.D. Warrack~\cite{Naimpally70} in 1970. 
\textcolor{blue}{\Squaresteel}
\end{remark}

\vspace*{0.1cm}

Lemma~\ref{thm:composition} shows that the composition of two proximally continuous maps is proximally continuous but it is also true  for any types of proximally continuous maps. 

\vspace*{0.1cm}

\begin{lemma}\label{thm:composition} 
	Composition of two proximally continuous maps is  proximally continuous. 
\end{lemma}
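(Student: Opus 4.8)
The plan is to unwind the definition of proximal continuity directly and chase the proximity relation through the two maps in succession. Suppose $f\colon (X,\delta_1)\to(Y,\delta_2)$ and $g\colon(Y,\delta_2)\to(Z,\delta_3)$ are both proximally continuous, and the goal is to show that $g\circ f\colon (X,\delta_1)\to(Z,\delta_3)$ preserves the proximity relation in the sense of the preceding definition.

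First I would fix arbitrary subsets $A,B\in 2^X$ with $A\ \delta_1\ B$. Applying the proximal continuity of $f$ to this pair yields $f(A)\ \delta_2\ f(B)$, where $f(A),f(B)\in 2^Y$. Since these are genuine elements of the power set on which $\delta_2$ is defined, I may now apply the proximal continuity of $g$ to them, obtaining $g(f(A))\ \delta_3\ g(f(B))$.

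Finally I would invoke the elementary set-theoretic identity $(g\circ f)(S)=g(f(S))$, valid for every $S\subseteq X$, so that the conclusion reads $(g\circ f)(A)\ \delta_3\ (g\circ f)(B)$. As $A,B\in 2^X$ were arbitrary, this is exactly the statement that $g\circ f$ preserves $\delta_1$-$\delta_3$ proximity, which completes the argument. There is no genuine obstacle here: the only point meriting a word of care is the legitimacy of the second step, namely that $f(A)$ and $f(B)$ lie in the domain $2^Y$ on which $\delta_2$ is defined, together with the image-of-composition identity; both are immediate, so the lemma follows purely by composing the two preservation implications.
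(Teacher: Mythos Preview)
Your proof is correct and follows exactly the same approach as the paper: fix $A,B$ with $A\ \delta_1\ B$, apply the proximal continuity of $f$ to get $f(A)\ \delta_2\ f(B)$, then that of $g$ to get $g\circ f(A)\ \delta_3\ g\circ f(B)$. The paper's version is terser, omitting your remarks about $f(A),f(B)\in 2^Y$ and the identity $(g\circ f)(S)=g(f(S))$, but the argument is otherwise identical.
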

\begin{proof}
	Let $f : (X, \delta_1)\to (Y,\delta_2)$ and $g:(Y,\delta_2)\to (Z,\delta_3)$ be proximally continuous maps and $A \ \delta_1 \  B$ in $X$. Then $f(A)  \Hquad  \delta_2 \  f(B)$ since $f$ is proximally continuous and $g\circ f(A) \  \delta_3 \ g \circ f(B)$, since $g$ is proximally continuous. 
\end{proof}


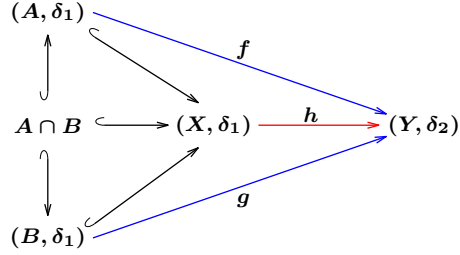
\begin{figure}[!ht]
	\centering
	\begin{pspicture}
	(-0.2,-0.2)(5.2,3.2)
	\centering
	\rput(0.0,3.0){\footnotesize $\boldsymbol{\left(A,\near_1\right)}$}
	\rput(0.0,1.5){\footnotesize $\boldsymbol{ A \cap B} $}
	\rput(0.0,0.0){\footnotesize $\boldsymbol{\left(B,\near_1\right)}$}
	\rput(2.2,1.5){\footnotesize $\boldsymbol{\left(X,\near_1\right)}$}
	\rput(5.0,1.5){\footnotesize $\boldsymbol{\left(Y,\near_2\right)}$}
	
\psline[linewidth=0.5pt,linecolor=black,hooklength=3mm,hookwidth=-2mm, arrowscale=0.5]{H-v}(0.0,1.8)(0.0,2.7)
\psline[linewidth=0.5pt,linecolor=black, hooklength=3mm, hookwidth=2mm, arrowscale=0.5]{H-v}(0.0,1.2)(0.0,0.3)
\psline[linewidth=0.5pt,linecolor=black, hooklength=3mm, hookwidth=-2mm, arrowscale=0.5]{H-v}(0.6,1.5)(1.6,1.5)
\psline[linewidth=0.5pt,linecolor=black, hooklength=3mm, hookwidth=-2mm, arrowscale=0.5]{H-v}(0.5,2.75)(2.0,1.8)
\psline[linewidth=0.5pt,linecolor=black, hooklength=3mm, hookwidth=-2mm, arrowscale=0.5]{H-v}(0.5,0.1)(2.0,1.2)
\psline[linewidth=0.5pt,linecolor=blue, arrowscale=0.5]{-v}(0.6,3)(4.5,1.65)
\psline[linewidth=0.5pt,linecolor=blue, hooklength=3mm, hookwidth=-2mm, arrowscale=0.5]{-v}(0.6,0.0)(4.5,1.35)
\psline[linewidth=0.5pt,linecolor=red, arrowscale=0.5]{-v}(2.8,1.5)(4.4,1.5)
	
\rput(2.6,2.5){\footnotesize $\boldsymbol{f}$}
\rput(2.6,0.5){\footnotesize $\boldsymbol{g}$}
\rput(3.5,1.65){\footnotesize $\boldsymbol{h}$}
	\end{pspicture}
	\caption[]{Gluing diagram for Proximity Spaces. Here, the black arrows represent inclusion maps and all triangles in the diagram commute.}
	\label{fig:proximalGlue}
\end{figure}

A diagram for the gluing Lemma~\ref{thm:glue} for proximity spaces is given in Fig.~\ref{fig:proximalGlue}.  This Lemma provides a basis for the proof of Theorem~\ref{thm:equivprox}.  

\vspace*{0.1cm}

\begin{lemma}\label{thm:glue} {\rm[Gluing Lemma for proximity spaces]} $\mbox{}$\\
	Suppose  $(X,\delta_{1})$ and $(Y,\delta_{2})$ are   proximity spaces and $A$ and $B$ are closed subsets of $X$ such that $A\cup B=X$. If $f:  (A,\delta_{1}) \to (Y, \delta_{2})$ and $g:  (B,\delta_{1}) \to (Y, \delta_{2})$ are proximally continuous maps such that $f(x)=g(x)$ for all $x\in A\cap B$, then the map $h: (X,\delta_{1}) \to (Y, \delta_{2})$ defined by 
	\[
	h(x)= \begin{cases}  f(x), & x\in A, \\  g(x), & x\in B \end{cases}	
	\]
	is also proximally continuous.
\end{lemma}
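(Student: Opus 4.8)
The plan is to show directly that $h$ preserves the proximity, i.e.\ that $C \ \delta_1 \ D$ in $X$ implies $h(C) \ \delta_2 \ h(D)$ in $Y$. Since the only information available is how $f$ acts on subsets of $A$ and how $g$ acts on subsets of $B$, the first step is to cut the pair $C,D$ along the cover $X = A \cup B$. Writing $C = (C\cap A)\cup(C\cap B)$ and $D=(D\cap A)\cup(D\cap B)$ and applying the additivity (union) axiom of the \v{C}ech proximity of \ref{ap:Cech} repeatedly, the relation $C\ \delta_1\ D$ becomes equivalent to the disjunction of the four statements $(C\cap A)\ \delta_1\ (D\cap A)$, $(C\cap B)\ \delta_1\ (D\cap B)$, $(C\cap A)\ \delta_1\ (D\cap B)$ and $(C\cap B)\ \delta_1\ (D\cap A)$. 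It thus suffices to derive $h(C)\ \delta_2\ h(D)$ in each of these four cases. I will also use repeatedly that additivity yields monotonicity of $\delta$ (enlarging either argument cannot destroy nearness), so in each case it is enough to exhibit a near pair of subsets of $h(C)$ and $h(D)$.

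The two pure cases are immediate from the definitions. If $(C\cap A)\ \delta_1\ (D\cap A)$, then both arguments are subsets of $A$, so proximal continuity of $f$ gives $f(C\cap A)\ \delta_2\ f(D\cap A)$; as $h$ restricts to $f$ on $A$ these sets are $h(C\cap A)$ and $h(D\cap A)$, and monotonicity promotes the relation to $h(C)\ \delta_2\ h(D)$. The case $(C\cap B)\ \delta_1\ (D\cap B)$ is handled identically with $g$ and $B$ in place of $f$ and $A$.

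The real content, and the step I expect to be the main obstacle, is the two mixed cases; by symmetry consider $(C\cap A)\ \delta_1\ (D\cap B)$. Here one argument is read by $f$ and the other by $g$, and no single map carries both, so this is exactly where the hypotheses that $A,B$ be closed and that $f=g$ on $A\cap B$ must enter. The strategy is to force a witness of the nearness into the overlap $A\cap B$: using the induced closure $x\in\cl(E)\iff\{x\}\ \delta_1\ E$, the nearness of $C\cap A$ and $D\cap B$ produces a point $p\in\cl(C\cap A)\cap\cl(D\cap B)$, and since $A$ and $B$ are closed one has $\cl(C\cap A)\subseteq A$ and $\cl(D\cap B)\subseteq B$, whence $p\in A\cap B$. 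A proximally continuous map is continuous for the induced closures, so $f(p)\in\cl\bigl(f(C\cap A)\bigr)$ and $g(p)\in\cl\bigl(g(D\cap B)\bigr)$; because $p\in A\cap B$ we have $f(p)=g(p)$, and this common point then lies in $\cl\bigl(h(C\cap A)\bigr)\cap\cl\bigl(h(D\cap B)\bigr)$, giving $h(C\cap A)\ \delta_2\ h(D\cap B)$ and hence, by monotonicity, $h(C)\ \delta_2\ h(D)$.

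The only delicate point in this plan is the passage from the abstract nearness $(C\cap A)\ \delta_1\ (D\cap B)$ to an honest common closure point $p$; this is transparent for the spatial \v{C}ech proximity used here, where $E\ \delta\ F$ is witnessed by $\cl(E)\cap\cl(F)\neq\emptyset$, and it is the one place where closedness of the cover is indispensable, since without it the witness $p$ could escape $A\cap B$ and the two definitions of $h$ would have no reason to agree there.
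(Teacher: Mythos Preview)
Your argument is correct and complete, but it follows a genuinely different route from the paper's own proof. The paper does not decompose $C$ and $D$ along the cover via the union axiom; instead it immediately passes to pointwise witnesses, asserting that $C\ \delta_1\ D$ furnishes points $c\in C$, $d\in D$ with either $c=d$ or $\{c\}\ \delta_1\ \{d\}$, and then argues by cases on whether $c,d$ lie in $A$, in $B$, or in $A\cap B$, using closedness to force that $c\in A$ implies $d\in A$ (and likewise for $B$). Your approach instead stays at the set level for the two ``pure'' cases, invoking proximal continuity of $f$ or $g$ directly on $C\cap A$, $D\cap A$ (resp.\ $C\cap B$, $D\cap B$), and only drops to a closure witness $p$ in the mixed case. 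This buys you something: the paper's opening reduction to singleton witnesses inside $C$ and $D$ is a rather special feature not guaranteed by the bare \v{C}ech axioms, whereas your use of the additivity axiom~({\bf P}.3) is axiomatic, and you invoke the Lodato closure-intersection characterization only at the single step where it is unavoidable. The paper's version is shorter and more elementary to read, but yours isolates more clearly exactly where closedness of $A,B$ and the specific (closure-based) nature of the proximity are actually needed.
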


\begin{proof}
	Let $C,D$ be subsets of $X$ such that $C \ \delta_1 \ D$ so that these two sets are near.  That is, there exist $c\in C$ and $d\in D$ that are either equal  $c=d$ or near to each other $\{c\} \ \delta_1 \ \{d\}$.  If $c=d$, then we are done.\\
	
	Assume  $\{c\} \ \delta_{1} \ \{d\}$. Note that $c\in A  \ (\in B)$ implies $d \in A \ (\in B)$, since $A \ (B)$ is closed. Therefore we have the following three cases.
	\begin{description}
		\item[Case~1] $c,d \in A$.\\
		In that case, we have  $h(\{c\})=f(\{c\})  \   \delta_{2} \  h(\{d\})=f(\{d\})$ so that $h(C) \ \delta_{2} \ h(D)$.\\
		
		\item[Case~2] $c,d \in  B$.  \\
		In that case, we have   $h(\{c\})=g(\{c\})  \   \delta_{2} \  h(\{d\})=g(\{d\})$ so that $h(C) \ \delta_{2} \ h(D)$. \\
		
		\item[Case~3] $c,d \in A \cap B$. \\
		In that case,  we have   $h(\{c\})=f(\{c\})=g(\{c\})  \   \delta_{2} \  h(\{d\})=f(\{d\}))=g(\{d\})$  so that $h(C) \ \delta_{2} \ h(D)$.\\
	\end{description}
	In all cases, $h$ satisfies the proximal continuity property. 
\end{proof}
\vspace*{0.1cm}

\subsection{Descriptive Proximity spaces}$\mbox{}$\\
Let $(X,\delta_{\Phi})$ be a descriptive  proximity space (see Appendix~\ref{app:dnear}). Then the \emph{descriptive closure of $A \subset X$} (denoted by $\cl_\Phi A$) is the set of all points in $X$ descriptively near to $A$, {\em i.e.},  
\begin{align*}
\cl_\Phi A &= \{ x\in X \ : \ x \ \delta_{\Phi} \ A  \}  \\
&=\{ x \in X \ : \ \Phi(x) \in \Phi(A)\}.
\end{align*}

\vspace*{0.1cm}

\noindent Note that  $A$ is \emph{descriptively closed}, provided $\cl_\Phi A = A$.  \\

\vspace*{0.1cm}

The following corollary is straightforward. 

\vspace*{0.1cm}

\begin{corollary} \label{cor:desclosed}
	Suppose $A$ is a descriptively closed subset of a descriptive proximity space  $(X,\delta_{\Phi})$. Then 
	\[
	x\in A \ \Leftrightarrow \  \Phi(x) \in \Phi(A).
	\]
	\textcolor{blue}{\Squaresteel}
\end{corollary}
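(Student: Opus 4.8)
The plan is to derive the equivalence directly from the two equal descriptions of the descriptive closure $\cl_\Phi A$ recorded just above the statement, invoking the hypothesis that $A$ is descriptively closed only in the one direction that actually needs it.

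First I would establish the forward implication $x \in A \Rightarrow \Phi(x) \in \Phi(A)$, which holds with no extra hypothesis: the image $\Phi(A)$ consists precisely of the descriptions $\Phi(a)$ of the points $a \in A$, so membership $x \in A$ forces $\Phi(x) \in \Phi(A)$ at once. Next I would treat the reverse implication, where descriptive-closedness enters. Assuming $\Phi(x) \in \Phi(A)$, the second line of the displayed formula for $\cl_\Phi A$ identifies the set $\{x \in X : \Phi(x) \in \Phi(A)\}$ with $\cl_\Phi A$, so that $x \in \cl_\Phi A$; since $A$ is descriptively closed we have $\cl_\Phi A = A$, and hence $x \in A$. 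Combining the two implications yields the asserted equivalence.

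There is essentially no obstacle here, since the result is a one-step reading of the definitions, but the single point that requires care is selecting the right one of the two equal forms of $\cl_\Phi A$. Using the set-builder form $\{x : \Phi(x) \in \Phi(A)\}$ makes the reverse direction immediate, whereas starting from the proximity form $\{x : x \ \delta_\Phi \ A\}$ would force an unnecessary detour through the definition of $\delta_\Phi$ in Appendix~\ref{app:dnear}. Choosing the former reduces the proof to a direct appeal to the meaning of descriptive closedness, namely the identity $\cl_\Phi A = A$.
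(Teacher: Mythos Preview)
Your proposal is correct and is exactly the argument the paper has in mind: the paper does not spell out a proof at all, simply declaring the corollary ``straightforward,'' and your derivation from the two displayed descriptions of $\cl_\Phi A$ together with the hypothesis $\cl_\Phi A = A$ is precisely that straightforward argument.
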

\vspace*{0.1cm}

\begin{definition}  {\rm [Descriptive intersection] \cite{Peters2013mcsintro} } $\mbox{}$\\
	The descriptive intersection $A\ \dcap\ B$ of two nonempty subsets $A$ and $B$ of a descriptive  proximity space $(X,\delta_{\Phi})$, is the set of all points in $A\cup B$ such that $ \Phi(A)$ and $\Phi(B)$ have common descriptions, i.e.
	\[
	A\ \dcap\ B = \left\{x\in A\cup B: \Phi(x) \in \Phi(A)\ \cap\ \Phi(B)\right\}.
	\] 
	\textcolor{blue}{\Squaresteel}
\end{definition}
\vspace*{0.1cm}

\begin{definition}  {\rm [Descriptive proximally continuous maps]}\label{def:dpc} $\mbox{}$\\
	A map $f: (X, \delta_{\Phi_1}) \to (Y, \delta_{\Phi_2})$ is descriptive proximally continuous (dpc), provided $A \ \delta_{\Phi_1} \ B$ implies   $f(A) \ \delta_{\Phi_2} \ f(B)$ for $A, B  \subset X$.
	\textcolor{blue}{\Squaresteel}
\end{definition}
\vspace*{0.1cm}

\begin{theorem}\label{thm:dlpcComposition}
	Composition of two dpc  maps is dpc. 
\end{theorem}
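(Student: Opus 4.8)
The plan is to mirror the argument of Lemma~\ref{thm:composition} essentially verbatim in the descriptive setting, since the notion of a dpc map (Definition~\ref{def:dpc}) is designed precisely so as to preserve the descriptive proximity relation. First I would fix dpc maps $f:(X,\delta_{\Phi_1})\to(Y,\delta_{\Phi_2})$ and $g:(Y,\delta_{\Phi_2})\to(Z,\delta_{\Phi_3})$, observing at the outset that the composite $g\circ f:(X,\delta_{\Phi_1})\to(Z,\delta_{\Phi_3})$ is well-defined because the codomain structure of $f$ and the domain structure of $g$ are literally the same descriptive proximity space $(Y,\delta_{\Phi_2})$.

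Next I would take arbitrary subsets $A,B\subset X$ with $A\ \delta_{\Phi_1}\ B$, i.e.\ $A$ and $B$ descriptively near. Applying the defining property of $f$ yields $f(A)\ \delta_{\Phi_2}\ f(B)$, and then applying the defining property of $g$ to this pair of descriptively near subsets of $Y$ yields $g(f(A))\ \delta_{\Phi_3}\ g(f(B))$. Rewriting the images as $(g\circ f)(A)$ and $(g\circ f)(B)$ gives $(g\circ f)(A)\ \delta_{\Phi_3}\ (g\circ f)(B)$, which is exactly the dpc condition for $g\circ f$, so the composite is dpc.

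Since there is no genuine computation here, I do not anticipate a real obstacle; the only point requiring care is bookkeeping on the feature maps. The intermediate relation $\delta_{\Phi_2}$, which serves simultaneously as the codomain relation of $f$ and the domain relation of $g$, is governed by the same probe $\Phi_2$, so that $f(A)\ \delta_{\Phi_2}\ f(B)$ is a legitimate hypothesis to feed into the dpc property of $g$. Once that compatibility is noted, the two invocations of Definition~\ref{def:dpc} chain together with no further hypotheses and the proof is complete.
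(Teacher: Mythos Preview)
Your proposal is correct and follows essentially the same approach as the paper's own proof: fix dpc maps $f$ and $g$, take $A\ \delta_{\Phi_1}\ B$, apply the dpc property of $f$ to obtain $f(A)\ \delta_{\Phi_2}\ f(B)$, then apply the dpc property of $g$ to obtain $g\circ f(A)\ \delta_{\Phi_3}\ g\circ f(B)$. The paper's version is terser, but the content is identical.
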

\begin{proof}
	Let $f : (X, \delta_{\Phi_1})\to (Y,\delta_{\Phi_2})$ and $g:(Y,\delta_{\Phi_2})\to (Z,\delta_{\Phi_3})$ be dpc maps and $A \ \delta_{\Phi_1} \  B$ in $X$. Then $f(A)  \Hquad  \delta_{\Phi_2}  \  f(B)$, since $f$ dpc and  $g\circ f(A) \  \delta_{\Phi_3}  \ g \circ f(B)$ since $g$ is dpc. 
\end{proof}

\begin{figure}[!ht]
	\centering
	\begin{pspicture}
	(-0.2,-0.2)(5.2,3.2)
	\centering
	\rput(0.0,3.0){\footnotesize $\boldsymbol{\left(A,\near_{\Phi_1}\right)}$}
	\rput(0.0,1.5){\footnotesize $\boldsymbol{ A \cap B} $}
	\rput(0.0,0.0){\footnotesize $\boldsymbol{\left(B,\near_{\Phi_1}\right)}$}
	\rput(2.2,1.5){\footnotesize $\boldsymbol{\left(X,\near_{\Phi_1}\right)}$}
	\rput(5.0,1.5){\footnotesize $\boldsymbol{\left(Y,\near_{\Phi_2}\right)}$}
	
	\psline[linewidth=0.5pt,linecolor=black,hooklength=3mm,hookwidth=-2mm, arrowscale=0.5]{H-v}(0.0,1.8)(0.0,2.7)
	\psline[linewidth=0.5pt,linecolor=black, hooklength=3mm, hookwidth=2mm, arrowscale=0.5]{H-v}(0.0,1.2)(0.0,0.3)
	\psline[linewidth=0.5pt,linecolor=black, hooklength=3mm, hookwidth=-2mm, arrowscale=0.5]{H-v}(0.6,1.5)(1.6,1.5)
	\psline[linewidth=0.5pt,linecolor=black, hooklength=3mm, hookwidth=-2mm, arrowscale=0.5]{H-v}(0.5,2.75)(2.0,1.8)
	\psline[linewidth=0.5pt,linecolor=black, hooklength=3mm, hookwidth=-2mm, arrowscale=0.5]{H-v}(0.5,0.1)(2.0,1.2)
	\psline[linewidth=0.5pt,linecolor=blue, arrowscale=0.5]{-v}(0.6,3)(4.5,1.65)
	\psline[linewidth=0.5pt,linecolor=blue, hooklength=3mm, hookwidth=-2mm, arrowscale=0.5]{-v}(0.6,0.0)(4.5,1.35)
	\psline[linewidth=0.5pt,linecolor=red, arrowscale=0.5]{-v}(2.8,1.5)(4.4,1.5)
	
	\rput(2.6,2.5){\footnotesize $\boldsymbol{f}$}
	\rput(2.6,0.5){\footnotesize $\boldsymbol{g}$}
	\rput(3.5,1.65){\footnotesize $\boldsymbol{h}$}
	\end{pspicture}
	\caption[]{Gluing diagram for Descriptive Proximity Spaces. Here, the black arrows represent inclusion maps and all triangles in the diagram commute.}
	\label{fig:LproximalGlue}
\end{figure}

We adapt the gluing Lemma~\ref{thm:glue} for descriptive  proximally continuous maps.  
\vspace*{0.1cm}

\begin{theorem} {\rm [Descriptive Gluing]} \label{thm:desglue}\\
	Let $(X, \delta_{\Phi_1})$ and $(Y, \delta_{\Phi_2})$ be two  descriptive proximity spaces and let $A$ and $B$ be two descriptively closed subsets of $X$ with $A \cup B = X$. If $f: (A, \delta_{\Phi_1}) \to (Y, \delta_{\Phi_2})$ and $g: (B, \delta_{\Phi_1}) \to (Y, \delta_{\Phi_2})$ are dpc maps such that $f(x)=g(x)$ for all $x \in A\cap B$, then the map $h: (X, \delta_{\Phi_1})\to (Y, \delta_{\Phi_2})$ is defined by
	\[
	h(x)= \begin{cases}  f(x), & \Phi_1(x)\in \Phi_1(A) \quad   ( \equiv x \in A  \ \mbox{by}  \  Corollary~\ref{cor:desclosed}),
	\\  g(x), &  \Phi_1(x)\in \Phi_1(B)  \quad   ( \equiv x \in B   \ \mbox{by} \  Corollary~\ref{cor:desclosed}) \end{cases}	
	\]
	is also dpc.
\end{theorem}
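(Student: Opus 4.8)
The plan is to transcribe the proof of Lemma~\ref{thm:glue} step for step, replacing the spatial nearness relations $\delta_1,\delta_2$ by their descriptive counterparts $\delta_{\Phi_1},\delta_{\Phi_2}$ and replacing every appeal to ordinary closedness with an appeal to Corollary~\ref{cor:desclosed}. First I would check that $h$ is well defined: the two branches agree on the overlap because $f(x)=g(x)$ for all $x\in A\cap B$, and by Corollary~\ref{cor:desclosed} the conditions $\Phi_1(x)\in\Phi_1(A)$ and $\Phi_1(x)\in\Phi_1(B)$ are equivalent to $x\in A$ and $x\in B$ respectively. Since $A\cup B=X$, every point of $X$ is then assigned a value, so the piecewise definition is legitimate.

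To prove that $h$ is dpc, I would take arbitrary $C,D\subset X$ with $C\ \delta_{\Phi_1}\ D$ and, from the definition of descriptive nearness, extract witnesses $c\in C$ and $d\in D$ with $\Phi_1(c)=\Phi_1(d)$. The essential step -- the only place the descriptive hypotheses do real work -- is to show that $c$ and $d$ lie in a common piece. Suppose $c\in A$; then $\Phi_1(c)\in\Phi_1(A)$, and since $\Phi_1(d)=\Phi_1(c)$ we obtain $\Phi_1(d)\in\Phi_1(A)$, whence Corollary~\ref{cor:desclosed} (using that $A$ is descriptively closed) gives $d\in A$. The identical argument with $B$ in place of $A$ settles the case $c\in B$. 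Consequently both witnesses land in $A$, in $B$, or (when $c\in A\cap B$) in $A\cap B$.

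This reduces the verification to the same three cases as in Lemma~\ref{thm:glue}. In Case~1 ($c,d\in A$) we have $h(c)=f(c)$ and $h(d)=f(d)$, and since $f$ is dpc and $\{c\}\ \delta_{\Phi_1}\ \{d\}$, it follows that $f(c)\ \delta_{\Phi_2}\ f(d)$, so that $h(C)\ \delta_{\Phi_2}\ h(D)$. Case~2 ($c,d\in B$) is identical with $g$ replacing $f$. In Case~3 ($c,d\in A\cap B$) the identity $f=g$ on the overlap forces the two descriptions to agree, so either map may be used and the conclusion follows as before; in all cases $h$ preserves descriptive nearness.

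I expect the only genuine obstacle to be the descriptive-closure step in the middle paragraph. In the non-descriptive lemma one argues $\{d\}\ \delta_1\ A$ from $c\in A$ by monotonicity and then invokes topological closedness, whereas here the corresponding move must be routed entirely through descriptions, passing from $\Phi_1(c)=\Phi_1(d)$ together with $\Phi_1(c)\in\Phi_1(A)$ to the membership $d\in A$ via Corollary~\ref{cor:desclosed}. Once that equivalence is secured, the three-case bookkeeping is purely formal and reproduces the earlier proof line for line.
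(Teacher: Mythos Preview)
Your proposal is correct and matches the paper's proof essentially line for line: the paper also picks witnesses $c\in C$, $d\in D$ with $\{c\}\ \delta_{\Phi_1}\ \{d\}$, uses descriptive closedness to force both into the same piece, and then runs the same three-case analysis. Your version is slightly more explicit about well-definedness of $h$ and about how Corollary~\ref{cor:desclosed} is invoked to pass from $\Phi_1(d)=\Phi_1(c)\in\Phi_1(A)$ to $d\in A$, but the argument is the same.
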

\begin{proof}
	Let $C,D$ be subsets of $X$ such that $C \ \delta_{\Phi_1} \ D$  (so,  these two sets are descriptively near).  That is, there exist $c\in C$ and $d \in D$ that are either equal  $c=d$ or descriptively near to each other $\{c\} \ \delta_{\Phi_1} \ \{d\}$.  If $c=d$, then we are done.
	
	\noindent Assume $\{c\} \ \delta_{\Phi_1} \ \{d\}$.  Note that $c\in A  \ (\in B)$ implies $d \in A \ (\in B)$ since $A \ (B)$ is descriptively closed. Therefore we have the following three cases.
	\begin{description}
		\item[Case~1] $c,d \in A$.\\
		In that case, we have  $h(\{c\})=f(\{c\})  \   \delta_{\Phi_2} \  h(\{d\})=f(\{d\})$  so that  $h(C) \  \delta_{\Phi_2} \ h(D)$.\\
		
		\item[Case~2] $c,d \in  B$.  \\
		In that case, we have   $h(\{c\})=g(\{c\})  \    \delta_{\Phi_2}  \  h(\{d\})=g(\{d\})$  so that $h(C) \  \delta_{\Phi_2}  \ h(D)$. \\
		
		\item[Case~3] $c,d \in A \cap B$. \\
		In that case, we have   $h(\{c\})=f(\{c\})=g(\{c\})  \    \delta_{\Phi_2}  \  h(\{d\})=f(\{d\}))=g(\{d\})$  so that $h(C) \  \delta_{\Phi_2}  \ h(D)$.\\
	\end{description}
	In all cases, $h$ satisfies the descriptive proximal continuity property. 
\end{proof}
\vspace*{0.1cm}

\section{Proximal Homotopy}
For two proximity spaces $(X,\delta_1)$ and $(Y,\delta_2)$,  let  $X\times Y$ denote their product.  Then the subsets   $A \times B$ and  $C \times D$ of $X\times Y$ are near, provided  $A \ \delta_1 \ C$ and $B\ \delta_2 \ D$.

\begin{definition} {\rm  [Proximal Homotopy]} $\mbox{}$\\
	Let $(X,\delta_1)$ and $(Y,\delta_2)$  be proximity  spaces and $f,g: (X,\delta_1) \to (Y,\delta_2)$ proximally continuous maps. Then we say $f$ and $g$ are proximally homotopic,  provided there exists a proximally continuous  map $H: X \times [0,1] \to Y$ such that  $H(x,0)=f(x)$  and  $H(x,1)=g(x)$. Such a map $H$ is called an proximal homotopy between $f$ and $g$.  In keeping with Hilton's notation~\cite{Hilton1952homotopy}, we write  $f \mathop{\sim}\limits_{\delta} g$, provided there is  a proximal homotopy between them.
	\textcolor{blue}{\Squaresteel}
\end{definition}

\begin{proposition} \label{thm:equivprox}
	Every proximal homotopy relation is an equivalence relation.
\end{proposition}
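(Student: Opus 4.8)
The plan is to verify the three defining properties of an equivalence relation — reflexivity, symmetry, and transitivity — for the relation $f \mathop{\sim}\limits_{\delta} g$ on proximally continuous maps $(X,\delta_1)\to(Y,\delta_2)$. Throughout, the workhorse is Lemma~\ref{thm:composition} (composition of proximally continuous maps is proximally continuous), combined with the observation, which follows directly from the product-nearness recalled just before the statement, that the projection $\pi\colon X\times[0,1]\to X$, the time-reflection $(x,t)\mapsto(x,1-t)$, and the affine rescalings of the unit interval are all proximally continuous maps in the product proximity.

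For reflexivity, given a proximally continuous $f$ I would take the constant-in-time homotopy $H=f\circ\pi$, so $H(x,t)=f(x)$; since $\pi$ and $f$ are proximally continuous, Lemma~\ref{thm:composition} makes $H$ proximally continuous, and $H(x,0)=H(x,1)=f(x)$ gives $f\mathop{\sim}\limits_{\delta} f$. For symmetry, suppose $f\mathop{\sim}\limits_{\delta} g$ via $H$ and set $H'=H\circ r$ with $r(x,t)=(x,1-t)$. As $r$ is a proximity isomorphism of $X\times[0,1]$, $H'$ is proximally continuous by Lemma~\ref{thm:composition}; since $H'(x,0)=g(x)$ and $H'(x,1)=f(x)$, this witnesses $g\mathop{\sim}\limits_{\delta} f$.

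The substantive step is transitivity, and here I expect the main obstacle. Suppose $f\mathop{\sim}\limits_{\delta} g$ via $H$ and $g\mathop{\sim}\limits_{\delta} k$ via $G$; the natural candidate is the concatenation $F(x,t)$ equal to $H(x,2t)$ for $t\le \tfrac12$ and $G(x,2t-1)$ for $t\ge \tfrac12$. The difficulty is that this piecewise map is not manifestly proximally continuous across the seam $t=\tfrac12$, which is precisely what the Gluing Lemma (Lemma~\ref{thm:glue}) is built to handle. I would apply it with the closed subsets $A=X\times[0,\tfrac12]$ and $B=X\times[\tfrac12,1]$, whose union is $X\times[0,1]$ and whose intersection is $X\times\{\tfrac12\}$. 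On $A$ the map $(x,t)\mapsto H(x,2t)$ is the composite of $H$ with the proximally continuous rescaling $(x,t)\mapsto(x,2t)$, and symmetrically on $B$; on the overlap the two definitions agree because $H(x,1)=g(x)=G(x,0)$. The Gluing Lemma then delivers that $F$ is proximally continuous, and $F(x,0)=f(x)$, $F(x,1)=k(x)$ yield $f\mathop{\sim}\limits_{\delta} k$. The only point needing care beyond routine checking is confirming that the product proximity renders the projection, reflection, and linear reparametrizations proximally continuous, which is immediate from the definition of nearness of product sets.
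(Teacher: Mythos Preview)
Your proof is correct and follows essentially the same approach as the paper: reflexivity and symmetry are dispatched quickly (the paper merely calls them ``straightforward'' while you spell out the constant-in-time and time-reversed homotopies), and transitivity is handled by concatenating the two homotopies at $t=\tfrac12$ and invoking the Gluing Lemma~\ref{thm:glue} on the closed halves $X\times[0,\tfrac12]$ and $X\times[\tfrac12,1]$, exactly as the paper does. Your version is slightly more explicit in justifying why the rescalings are proximally continuous, but the argument is the same.
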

\begin{proof}
	A check that $\mathop{\sim}\limits_{\delta}$ is reflexive and symmetric is straightforward. 
	
	\noindent Now let $F$ and $G$ be  proximal homotopies between $f$ and $g$ and  between $g$ and $h$, respectively. Then the function $H: X \times [0,1] \to Y$ defined by 
	\[
	H(x,t)= \begin{cases}  F(x, 2t), & t \in [0,\frac{1}{2}]     \\   G(x, 2t-1), & t\in [\frac{1}{2},1]   \end{cases}
	\]
is proximally continuous by Theorem~\ref{thm:glue},  so that this defines an proximal homotopy between $f$ and $h$. 
\end{proof}
\vspace*{0.1cm}

\begin{definition} {\rm [Relative proximal Homotopy]} $\mbox{}$\\ 
	Let $(X,\delta_1)$ and $(Y,\delta_2)$  be proximity spaces and $A \subset X$. Then two proximally continuous  maps $f, g: (X,\delta_1) \to (Y,\delta_2)$  are said to be proximally homotopic relative to $A$, provided there exists an proximal homotopy $H$ between $f$ and $g$ such that $H(a,t)=f(a)=g(a)$ for all $a \in A$ and $t\in [0,1]$.  We write $f \mathop{\sim}\limits_{\delta} g \ \mbox{(rel A)}$, provided there is a proximal homotopy relative to $A$. 
	\textcolor{blue}{\Squaresteel}
\end{definition}

\begin{proposition}
	Suppose $f,g: (X, \delta_1) \to (Y, \delta_2)$ are proximally homotopic.  If  $h: (Y, \delta_2) \to (Z, \delta_3)$ is proximally continuous, then the maps $h\circ f$ and $h\circ g$ are also proximally homotopic.
\end{proposition}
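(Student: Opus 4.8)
The plan is to build the required proximal homotopy between $h \circ f$ and $h \circ g$ by post-composing the given homotopy with $h$. Since $f \mathop{\sim}\limits_{\delta} g$, there is by definition a proximally continuous map $H : X \times [0,1] \to Y$ with $H(x,0) = f(x)$ and $H(x,1) = g(x)$. I would take the candidate homotopy to be the composite $h \circ H : X \times [0,1] \to Z$, which is well-defined because $H$ lands in $(Y,\delta_2)$ and $h$ is defined on $(Y,\delta_2)$.

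First I would verify that $h \circ H$ is proximally continuous. This is immediate from Lemma~\ref{thm:composition}: both $H$ and $h$ are proximally continuous, so their composition is proximally continuous as well. Next I would check the two endpoint conditions. For every $x \in X$ we have $(h \circ H)(x,0) = h(H(x,0)) = h(f(x)) = (h \circ f)(x)$ and similarly $(h \circ H)(x,1) = h(H(x,1)) = h(g(x)) = (h \circ g)(x)$. Hence $h \circ H$ restricts to $h \circ f$ at the level $t = 0$ and to $h \circ g$ at the level $t = 1$.

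These two facts together show that $h \circ H$ is a proximal homotopy between $h \circ f$ and $h \circ g$, so that $h \circ f \mathop{\sim}\limits_{\delta} h \circ g$, as required. There is essentially no serious obstacle in this argument: the entire content is carried by the composition lemma, and the only point requiring a moment of care is confirming that the codomain of $H$ matches the domain of $h$, so that the post-composition is legitimate and inherits proximal continuity from its two factors.
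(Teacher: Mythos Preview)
Your proof is correct and follows essentially the same approach as the paper: post-compose the given proximal homotopy with $h$ and invoke Lemma~\ref{thm:composition} to verify proximal continuity of the composite. The only cosmetic difference is that the paper also explicitly notes that $h\circ f$ and $h\circ g$ are themselves proximally continuous by Lemma~\ref{thm:composition}, which you leave implicit.
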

\begin{proof}
	Let $F: X \times [0,1] \to Y$ be the proximal homotopy between $f$ and $g$ so that $F(x,0)=f(x)$ and $F(x,1)=g(x)$. Note that $h\circ f$ and $h\circ g$ are proximally continuous by Lemma~\ref{thm:composition} and  the map $H: X \times [0,1] \to Y$ defined by $H(x,t)=h\circ F(x,t)$ is the desired proximal homotopy between them. 
\end{proof}

\begin{proposition}
	Suppose $f,g: (X, \delta_1) \to (Y, \delta_2)$ are proximally homotopic.  If  $k: (W, \delta_0) \to (X, \delta_1)$ is proximally continuous,  then the maps $f\circ k$ and $g\circ k$ are also proximally homotopic.
\end{proposition}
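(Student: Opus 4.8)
The plan is to mirror the structure of the immediately preceding proposition, since the two statements are dual: there we postcomposed a homotopy with a fixed proximally continuous map, and here we precompose with a fixed proximally continuous map $k$. First I would invoke the hypothesis to fix a proximal homotopy $F \colon X \times [0,1] \to Y$ between $f$ and $g$, so that $F(x,0)=f(x)$ and $F(x,1)=g(x)$ for all $x \in X$. The goal is to exhibit a single proximally continuous map $H \colon W \times [0,1] \to Y$ satisfying $H(w,0)=f\circ k(w)$ and $H(w,1)=g\circ k(w)$.

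The natural candidate is $H(w,t) = F(k(w),t)$, which I would write as the composite $H = F \circ (k \times \mathrm{id})$, where $k \times \mathrm{id} \colon W \times [0,1] \to X \times [0,1]$ sends $(w,t)$ to $(k(w),t)$. Checking the boundary conditions is immediate: $H(w,0)=F(k(w),0)=f(k(w))$ and $H(w,1)=F(k(w),1)=g(k(w))$, as required. The remaining obligation is proximal continuity of $H$, and by Lemma~\ref{thm:composition} it suffices to show that $k \times \mathrm{id}$ is proximally continuous from $(W \times [0,1])$ to $(X \times [0,1])$, since $F$ is proximally continuous by assumption and the composition of proximally continuous maps is again proximally continuous.

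The one step that needs genuine care — and the place I expect the main obstacle — is verifying that $k \times \mathrm{id}$ is proximally continuous with respect to the product proximities. Here I would use the product-proximity description given at the start of this section: subsets $A \times B$ and $C \times D$ of a product are near precisely when $A$ is near $C$ in the first factor and $B$ is near $D$ in the second. So given near subsets $P$ and $Q$ of $W \times [0,1]$, I would reduce to the case of product-form sets $P = A \times B$ and $Q = C \times D$ with $A \mathop{\delta}_0 C$ in $W$ and $B \mathop{\delta} D$ in $[0,1]$. Since $k$ is proximally continuous, $k(A) \mathop{\delta}_1 k(C)$, while the identity trivially preserves $B \mathop{\delta} D$; hence the images $k(A)\times B$ and $k(C)\times D$ are near in $X \times [0,1]$, which is exactly proximal continuity of $k \times \mathrm{id}$.

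Finally, I would assemble these pieces: $k \times \mathrm{id}$ is proximally continuous, $F$ is proximally continuous, so $H = F \circ (k \times \mathrm{id})$ is proximally continuous by Lemma~\ref{thm:composition}, and its endpoint values are $f\circ k$ and $g\circ k$. Therefore $H$ is the desired proximal homotopy and $f\circ k \mathop{\sim}\limits_{\delta} g\circ k$. The subtlety to watch is handling arbitrary near subsets of the product rather than only product-form ones, so I would either restrict attention to the generating product sets or phrase the product-proximity argument carefully enough to cover the general case.
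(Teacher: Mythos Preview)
Your proposal is correct and follows essentially the same approach as the paper: define $H(w,t)=F(k(w),t)$ and check the endpoint conditions. The paper's proof is in fact terser than yours---it simply asserts that this map is the desired proximal homotopy without explicitly factoring it as $F\circ(k\times\mathrm{id})$ or verifying proximal continuity of $k\times\mathrm{id}$ via the product-proximity description---so your added justification (and your caveat about product-form versus arbitrary near subsets) goes beyond what the paper supplies.
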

\begin{proof}
	Let $F: X \times [0,1] \to Y$ be the proximal homotopy between $f$ and $g$ so that $F(x,0)=f(x)$ and $F(x,1)=g(x)$. Note that $f\circ k$ and $g\circ k$ are proximally continuous by Lemma~\ref{thm:composition} and  the map $K: Z \times [0,1] \to Y$ defined by $K(z,t)=F(k(z),t)$ is the desired proximal homotopy between them. 
\end{proof}

\begin{definition}
	A proximally continuous map is proximally nullhomotopic, provided it is proximally homotopic to a constant map. 
	\textcolor{blue}{\Squaresteel}
\end{definition}

\vspace*{0.1cm}

\begin{definition}
	A proximity space is proximally contractible, provided the identity map on it is proximally homotopic to a constant map. 
	\quad\textcolor{blue}{\Squaresteel}
\end{definition}

\vspace*{0.1cm}

\begin{definition}
	Two proximity spaces $(X, \delta_1)$ and $(Y, \delta_2)$ are proximally homotopy equivalent, provided there exist  proximally continuous maps $f: (X, \delta_1) \to (Y, \delta_2)$ and $g: (Y, \delta_2) \to (X, \delta_1)$ such that $g\circ f$  and $f \circ g$ are proximally homotopic to the identity maps on $X$ and $Y,$ respectively. 
\end{definition}


\subsection{Homotopy between descriptive proximally continuous maps}$\mbox{}$\\
The results for pairs of proximity spaces given so far hold for proximity spaces without restrictions.  
\begin{proposition}\label{thm:dlpProduct}
	The product of descriptive proximity spaces is a descriptive proximity  space.
\end{proposition}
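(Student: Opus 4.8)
The plan is to exhibit an explicit probe on $X\times Y$ and to show that the descriptive proximity it induces both agrees with the product nearness already introduced for rectangles at the start of this section and satisfies the descriptive \v{C}ech axioms of Appendix~\ref{app:dnear}. Concretely, I would take the feature map $\Phi\colon X\times Y \to \Phi_1(X)\times\Phi_2(Y)$ given by $\Phi(x,y)=\bigl(\Phi_1(x),\Phi_2(y)\bigr)$, and declare, for arbitrary $E,F\subseteq X\times Y$, that $E\ \delta_{\Phi}\ F$ holds iff $\Phi(E)\cap\Phi(F)\neq\emptyset$, exactly as in the one-factor case. The first thing to verify is consistency with the rectangle rule: since $\Phi(A\times B)=\Phi_1(A)\times\Phi_2(B)$, one computes $\Phi(A\times B)\cap\Phi(C\times D)=\bigl(\Phi_1(A)\cap\Phi_1(C)\bigr)\times\bigl(\Phi_2(B)\cap\Phi_2(D)\bigr)$, which is nonempty precisely when both factors are, i.e.\ when $A\ \delta_{\Phi_1}\ C$ and $B\ \delta_{\Phi_2}\ D$. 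Hence the induced relation restricts correctly to the product nearness already defined.

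With $\delta_{\Phi}$ defined by image-intersection, each descriptive \v{C}ech axiom reduces to an elementary property of the set-valued map $\Phi$. Symmetry is immediate from the commutativity of $\cap$, and the empty-set axiom holds because $\Phi(\emptyset)=\emptyset$. For the union (additivity) axiom I would use $\Phi(E\cup F)=\Phi(E)\cup\Phi(F)$, so that $\Phi(G)\cap\Phi(E\cup F)=\bigl(\Phi(G)\cap\Phi(E)\bigr)\cup\bigl(\Phi(G)\cap\Phi(F)\bigr)$ is nonempty iff one of the two pieces is, giving $G\ \delta_{\Phi}\ (E\cup F)$ iff $G\ \delta_{\Phi}\ E$ or $G\ \delta_{\Phi}\ F$. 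Finally, the descriptive-intersection axiom follows because $E\ \dcap\ F\neq\emptyset$ forces a point of $E\cup F$ whose description lies in $\Phi(E)\cap\Phi(F)$, so $\Phi(E)\cap\Phi(F)\neq\emptyset$ and therefore $E\ \delta_{\Phi}\ F$.

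The only genuine subtlety is that the nearness stated at the opening of this section is formulated only for rectangles $A\times B$ and $C\times D$, whereas a bona fide descriptive proximity space must have its relation defined on \emph{all} subsets of $X\times Y$. Passing to the componentwise probe $\Phi=(\Phi_1,\Phi_2)$ resolves this, since every subset of $X\times Y$ has a well-defined image under $\Phi$, so no ad hoc extension is needed and the axioms hold verbatim. I expect this point, namely confirming that the rectangle definition extends unambiguously to arbitrary subsets and that the extension is exactly the one forced by $\Phi$, to be the main (though mild) obstacle; the remainder is a direct transcription of the single-factor descriptive proximity verification.
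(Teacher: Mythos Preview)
Your proof is correct and in fact more careful than the paper's. The paper's argument differs from yours in two ways: it treats arbitrary products $\prod_{i\in J} X_i$ rather than binary ones, and, more substantively, it \emph{defines} the product relation via projections, declaring $A\ \delta_\Phi\ B$ iff $\mathrm{pr}_i(A)\ \delta_{\Phi_i}\ \mathrm{pr}_i(B)$ for every $i$, without checking any of the axioms of Appendix~\ref{app:dnear}. Your image-intersection definition $E\ \delta_\Phi\ F \Leftrightarrow \Phi(E)\cap\Phi(F)\neq\emptyset$ agrees with the projection-based one on rectangles (as you verify) but not on general subsets, and it has the virtue that the axiom check you carry out actually succeeds.

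By contrast, the paper's projection-based relation can fail axiom (dP.3): with $X=Y=\{0,1\}$ and identity probes, take $A=\{(0,0)\}$, $B=\{(0,1)\}$, $C=\{(1,0)\}$; then under the projection definition $A\ \delta_\Phi\ (B\cup C)$ holds (both projections of $A$ meet those of $B\cup C$), yet $A$ is near neither $B$ nor $C$ separately. So your route is not merely different but safer: the construction in the paper, read literally, does not produce a descriptive proximity space in the sense of the appendix, whereas yours does.
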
 
\begin{proof}
	Let $\{(X_i, \delta_{\Phi_i})\}_{i\in J}$ be a family of descriptive proximity spaces spaces, where $J$ is an index set. Then we can define a descriptive nearness relation $\delta_{\Phi}$ on the product space  $X:= \prod_{i\in J} X_i$  with the probe function $\Phi:=\prod_{i\in J} \Phi_i $ by declaring that two subsets $A, B$ of $X$ are descriptively near, provided $A \  \delta_{\Phi} \  B$ if and only if $\mathrm{pr}_i(A) \ \delta_{\Phi_i} \ \mathrm{pr}_i(B)$ for all $i \in J$, where $\mathrm{pr}_i$ is the i$^{th}$  projection map of $X$ onto $X_i$.
\end{proof}


\begin{remark}\label{rem:descriptiveNearnessRelation}
	To define the descriptive homotopy between dpc maps, we impose a descriptive nearness relation on the closed interval $[0,1]$ in the following manner. Two subsets $A$ and $B$ of $[0,1]$ are descriptively near, provided $D(A,B)=0$ (that is, the descriptive proximity relation and the (metric) proximity relation coincide).
	\textcolor{blue}{\Squaresteel}
\end{remark}

The descriptive nearness relation introduced in Remark~\ref{rem:descriptiveNearnessRelation}
leads to descriptive homotopic maps.\\

\begin{definition} {\rm  [Descriptive proximal Homotopy]} $\mbox{}$ \\
	Let $(X,\delta_{\Phi_1})$ and $(Y,\delta_{\Phi_2})$  be descriptive proximity spaces and $f, g: (X,\delta_{\Phi_1}) \to (Y,\delta_{\Phi_2})$ dpc  maps. Then we say $f$ and $g$ are  descriptive proximally homotopic, provided there exists a dpc  map $H: X \times [0,1] \to Y$ such that $H(x,0)=f(x)$  and  $H(x,1)=g(x)$. Such a map $H$ is called a descriptive proximal homotopy between $f$ and $g$. We denote $f \mathop{\sim}\limits_{\Phi}  g$, provided  there exists a descriptive proximal homotopy between them.
	\textcolor{blue}{\Squaresteel}
\end{definition}


\begin{proposition}\label{thm:LproximalHomotopy}
	Every descriptive proximal homotopy relation is an equivalence relation.
\end{proposition}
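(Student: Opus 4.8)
The plan is to mirror exactly the proof of Proposition~\ref{thm:equivprox} (ordinary proximal homotopy), since the descriptive case differs only in replacing $\delta_i$ by $\delta_{\Phi_i}$ throughout and in invoking the descriptive gluing result of Theorem~\ref{thm:desglue} in place of the ordinary Lemma~\ref{thm:glue}. I must verify that $\mathop{\sim}\limits_{\Phi}$ is reflexive, symmetric, and transitive on the set of dpc maps $(X,\delta_{\Phi_1}) \to (Y,\delta_{\Phi_2})$.

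First I would dispatch reflexivity: for any dpc map $f$, the map $H(x,t) = f(x)$ is dpc (it ignores the $[0,1]$ coordinate, and $f$ is dpc) and satisfies $H(x,0)=H(x,1)=f(x)$, so $f \mathop{\sim}\limits_{\Phi} f$. For symmetry, if $H$ is a descriptive proximal homotopy from $f$ to $g$, then $H'(x,t) := H(x,1-t)$ is dpc and witnesses $g \mathop{\sim}\limits_{\Phi} f$; here one uses that $t \mapsto 1-t$ respects the descriptive nearness relation on $[0,1]$ fixed in Remark~\ref{rem:descriptiveNearnessRelation} (it is an isometry, so $D$-distances are preserved) and that the product descriptive proximity of Proposition~\ref{thm:dlpProduct} behaves well under this reparametrization.

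The substantive step is transitivity, carried out by concatenation exactly as in Proposition~\ref{thm:equivprox}. Given descriptive proximal homotopies $F$ from $f$ to $g$ and $G$ from $g$ to $h$, I would define
\[
H(x,t) = \begin{cases} F(x,2t), & t \in [0,\tfrac{1}{2}], \\ G(x,2t-1), & t \in [\tfrac{1}{2},1], \end{cases}
\]
and argue that $H$ is dpc via the descriptive gluing Theorem~\ref{thm:desglue}. To apply it I would take the two descriptively closed pieces of $X \times [0,1]$ to be $A = X \times [0,\tfrac{1}{2}]$ and $B = X \times [\tfrac{1}{2},1]$, whose union is the whole product and whose intersection is $X \times \{\tfrac12\}$; on that overlap the two definitions agree because $F(x,1)=g(x)=G(x,0)$, matching the hypothesis $f(x)=g(x)$ for $x \in A\cap B$ in Theorem~\ref{thm:desglue}. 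Since $H(x,0)=F(x,0)=f(x)$ and $H(x,1)=G(x,1)=h(x)$, this $H$ is the required descriptive proximal homotopy from $f$ to $h$.

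The main obstacle I anticipate is verifying the technical preconditions for invoking Theorem~\ref{thm:desglue}, namely that the reparametrized maps $(x,t)\mapsto F(x,2t)$ and $(x,t)\mapsto G(x,2t-1)$ are themselves dpc on $A$ and $B$, and that $A,B$ are genuinely descriptively closed in the product descriptive proximity. This reduces to checking that the affine rescalings $t\mapsto 2t$ and $t\mapsto 2t-1$ are descriptive proximally continuous for the nearness relation of Remark~\ref{rem:descriptiveNearnessRelation}; because that relation is governed by $D(A,B)=0$ and coincides with the metric proximity, these scalings are (up to a bounded factor) Lipschitz and hence proximally continuous, so composition with the dpc maps $F$, $G$ stays dpc by Theorem~\ref{thm:dlpcComposition}. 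I would state these reparametrization facts briefly and then let the three-case structure of Theorem~\ref{thm:desglue} do the rest, rather than re-deriving the case analysis.
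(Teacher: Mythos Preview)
Your proposal is correct and follows essentially the same approach as the paper's own proof: dismiss reflexivity and symmetry as straightforward, then prove transitivity by defining the concatenated homotopy $H(x,t)$ via $F(x,2t)$ on $[0,\tfrac12]$ and $G(x,2t-1)$ on $[\tfrac12,1]$, and appeal to the descriptive gluing Theorem~\ref{thm:desglue} to conclude $H$ is dpc. Your additional discussion of why the affine reparametrizations are dpc and why $A,B$ are descriptively closed is more careful than the paper, which simply asserts that Theorem~\ref{thm:desglue} applies, but the underlying argument is identical.
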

\begin{proof}
	It's easy to check that $ \mathop{\sim}\limits_{\Phi}$ is reflexive and symmetric. 
	
	\noindent Let $F$ and $G$ are the descriptive proximal homotopies between $f$ and $g$ and  between $g$ and $h$, respectively. Then the function $H: X \times [0,1] \to Y$ defined by 
	\[
	H(x)= \begin{cases}  F(x, 2t), & t \in [0,\frac{1}{2}],     \\   G(x, 2t-1), & t\in [\frac{1}{2},1]   \end{cases}
	\]
	is dpc by Theorem~\ref{thm:desglue},  so that this defines a descriptive proximal homotopy between $f$ and $h$.  
\end{proof}

\begin{definition} {\rm [Descriptive proximal Relative Homotopy]} $\mbox{}$ \\
	Let $(X,\delta_{\Phi_1})$ and $(Y,\delta_{\Phi_2})$  be descriptive proximity spaces and $A \subset X$. Then  two dpc maps $f, g: (X,\delta_{\Phi_1}) \to (Y,\delta_{\Phi_2})$  are said to be descriptive proximally homotopic relative to $A$, provided there exists a descriptive proximal homotopy $H$ between $f$ and $g$ such that $H(a,t)=f(a)=g(a)$ for all $a \in A$ and $t\in [0,1]$.  We write $f \mathop{\sim}\limits_{\Phi}   g \ \mbox{(rel A)}$, provided there is a descriptive proximal homotopy relative to $A$. \textcolor{blue}{\Squaresteel}
\end{definition}


\subsection{Paths in proximity spaces}
\label{subsec:pathsanddescriptivepaths}

From Remark~\ref{rem:descriptiveNearnessRelation}, we know that the descriptive nearness relation also induces a descriptive proximity relation on $[0,1]$. This leads to the introduction of (descriptive) proximal paths in a (descriptive) proximity space. 


\begin{definition} {\rm [Proximal Path]}$\mbox{}$\\
	Let  $(X,\delta)$ be a proximity space and $x_0, x_1 \in X$.  Then a {\bf proximal path} between $x_0$ and $x_1$ is an proximally continuous map $\alpha : [0,1] \to X$  such that $\alpha(0)=x_0$ and $\alpha(1)=x_1$, {\em i.e.}, for two subsets of $A, B$ in $[0,1]$, $D(A,B)=0$ implies $\alpha(A)\  \delta \ \alpha(B)$.
	\textcolor{blue}{\Squaresteel}
\end{definition}

\vspace*{0.1cm}

\noindent In this section, we introduce constant proximal paths and their descriptive forms.
\vspace*{0.1cm}

\begin{definition} {\rm[Constant Proximal Path]}$\mbox{}$\\
	For a proximity space $(X,\delta)$, the constant proximal path $c: [0,1] \to X$  at  $x_0\in X$  is the proximal path such that $c(t)=x_0$ for every $t\in [0,1]$. 
	\textcolor{blue}{\Squaresteel}
\end{definition}

\vspace*{0.1cm}
\begin{definition}\label{def:descriptiveProxPath}
{\rm	[Descriptive proximal path]} $\mbox{}$\\
	Let  $(X,\delta_\Phi)$ be a descriptive proximity space and $x_0, x_1 \in X$.  Then a \emph{descriptive proximal path} between $x_0$ and $x_1$ is a dpc map $\alpha : [0,1] \to X$ such that $\alpha(0)=x_0$ and $\alpha(1)=x_1$, {\em i.e.}, for two subsets of $A, B$ in $[0,1]$, $D(A,B)=0$ implies $\alpha(A)\  \delta_\Phi\ \alpha(B)$.
	\textcolor{blue}{\Squaresteel}
\end{definition}

\vspace*{0.1cm}

Descriptive  proximally continuous maps were informally introduced in~\cite{Peters2016ComputationalProximity},
defined here in terms of path descriptions, utilizing the descriptive proximity relation $\dnear$ (see ~\ref{app:dnear}).\\

\begin{definition} {\rm [Path description]}$\mbox{}$ \\
	Let $h,k$ be proximally homotopic paths in a proximity space $X$.
	\begin{align*}
	\boldsymbol{\Phi(h)} &= \overbrace{\left\{\ \Phi(h(s)):\ s \in [0,1] \right\} \subseteq \mathbb{R}^n}^{\mbox{\textcolor{blue}{\bf set of  feature vectors that  describe path $h$}}}.\\
	\boldsymbol{\Phi(h) = \Phi(k)} &\overbrace{\Leftrightarrow  \boldsymbol{h\ \dnear\ k}.}^{\mbox{\textcolor{blue}{\bf descriptively close paths}}}\\
	\end{align*}
	
	Similarly, for descriptively close homotopy classes $[h], [k]$, we write 
	\[
	\Phi([h]) = \Phi([k]) \overbrace{\Leftrightarrow  \boldsymbol{[h]\ \dnear\ [k].}}^{\mbox{\textcolor{blue}{\bf descriptively close path classes}}}\mbox{\quad\textcolor{blue}{\Squaresteel}}
	\]
\end{definition}
\vspace*{0.1cm}

In other words, the closeness of descriptions of paths (and path classes) is expressed using the descriptive proximity relation $\dnear$.  \\

\begin{definition}
	Let $[h],[k]$ be nonempty classes of paths in a  proximity space $X$.  A map $f:(2^X\times I,\dnear)\to (2^X\times I,\dnear)$ is \emph{descriptive proximally continuous} (dpc), provided
	\[
	[h]\ \dnear\ [k]\ \mbox{implies}\ f([h])\ \dnear\ f([k]).
	\]
\end{definition}

\vspace*{0.1cm}

Unlike the constant proximal path, descriptive proximal paths (from Def.\ref{def:descriptiveProxPath}) fall into two niches, namely, {\bf (ordinary descriptive) constant paths} and {\bf degenerate  descriptive constant paths}, introduced in this section. 
These proximal paths lead to introduction of descriptive contractibility and an extended form of Tanaka good cover.\\


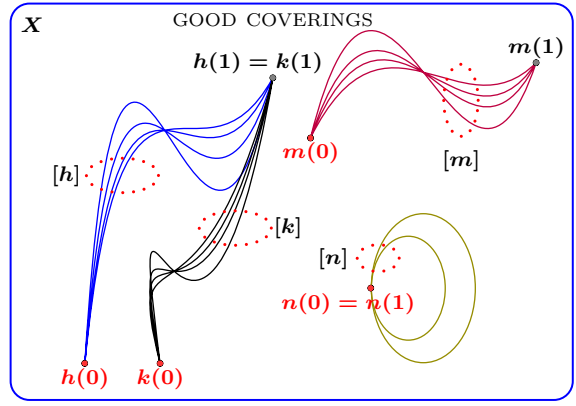
\begin{figure}[!ht]
	\centering
	\begin{pspicture}
	(2.5,-0.5)(3.5,4.0)
	\psframe[linewidth=0.75pt,linearc=0.25,cornersize=absolute,linecolor=blue](-0.5,-0.5)(7.0,4.8)
	\pscustom[linewidth=0.5pt,linecolor=blue]{%
		\moveto(0.5,0)
		\curveto(1,5)(2,2)(3,3.8)
		\moveto(0.5,0)
		\curveto(1,5.5)(2,1.5)(3,3.8)
		\moveto(0.5,0)
		\curveto(1,6.5)(2,0.5)(3,3.8)
		\moveto(0.5,0)
		\curveto(1,8)(2,-1)(3,3.8)

	}
	\pscustom[linewidth=0.5pt,linecolor=black]{%
		\moveto(1.5,0)
		\curveto(1,2)(2,0)(3,3.8)
		\moveto(1.5,0)
		\curveto(1,2.5)(2,-0.5)(3,3.8)
		\moveto(1.5,0)
		\curveto(1,3)(2,-1)(3,3.8)
		\moveto(1.5,0)
		\curveto(1,4)(2,-2)(3,3.8)
	}

  	\pscustom[linewidth=0.5pt,linecolor=purple]{%
  	\moveto(3.5,3)
  	\curveto(4.5,5)(5.5,3)(6.5,4)
  	\moveto(3.5,3)
  	\curveto(4.5,5.5)(5.5,2.5)(6.5,4)
  	\moveto(3.5,3)
  	\curveto(4.5,6)(5.5,2)(6.5,4)
    \moveto(3.5,3)
  	\curveto(4.5,7)(5.5,1)(6.5,4)
  }

  \psellipse[linecolor=olive,linewidth=0.5pt]%
  (5,1)(0.7,1)
  \psellipse[linecolor=olive,linewidth=0.5pt]%
  (4.8,1)(0.5,0.7)

	\psellipse[linecolor=red,linestyle=dotted,linewidth=1.2pt]%
	(2.5,1.8)(0.50,0.25)
	\psellipse[linecolor=red,linestyle=dotted,linewidth=1.2pt]%
	(1.0,2.5)(0.50,0.25)
	\psellipse[linecolor=red,linestyle=dotted,linewidth=1.2pt]%
	(5.5,3.5)(0.25,0.50)  
	\psellipse[linecolor=red,linestyle=dotted,linewidth=1.2pt]%
	(4.4,1.4)(0.3,0.2) 
	\psdots[dotstyle=o,dotsize=2.5pt,linewidth=1.2pt,linecolor=black,fillcolor=red!80]
	(0.5,0)(1.5,0)(3.5,3)(4.3,1)
	\psdots[dotstyle=o,dotsize=2.5pt,linewidth=1.2pt,linecolor=black,fillcolor=black!50]
    (3,3.8)(6.5,4)

	\rput(-0.2,4.5){\footnotesize $\boldsymbol{X}$}
	\rput(0.5,-0.2){\footnotesize {\color{red} $\boldsymbol{h(0)}$}}
    \rput(3.5,2.8){\footnotesize {\color{red} $\boldsymbol{m(0)}$}}
	\rput(2.8,4.0){\footnotesize {\color{black} $\boldsymbol{h(1)=k(1)}$}}
    \rput(4,0.8){\footnotesize {\color{red} $\boldsymbol{n(0)=n(1)}$}}
    \rput(6.5,4.2){\footnotesize {\color{black} $\boldsymbol{m(1)}$}}
	\rput(1.5,-0.2){\footnotesize {\color{red} $\boldsymbol{k(0)}$}}
	\rput(0.25,2.5){\footnotesize $\boldsymbol{[h]}$}
	\rput(3.2,1.8){\footnotesize $\boldsymbol{[k]}$}
	\rput(5.5,2.7){\footnotesize $\boldsymbol{[m]}$}
	\rput(3.8,1.4){\footnotesize $\boldsymbol{[n]}$}
	\end{pspicture}
	\caption[]{The identity map on $H(X)$ is degenerate descriptive constant.}
	\label{fig:example}
\end{figure}

\begin{definition} {\rm [Descriptive constant map]} \\
	Let  $(X,\delta_{\Phi_1})$ and $(Y,\delta_{\Phi_2})$ be  descriptive proximity spaces. Then a map $d: X \to Y$ is said to be a  descriptive constant, provided,   $d(x)=y_0$ for all $x \in X$ and for some $y_0 \in Y$. 	$\mbox{\textcolor{blue}{\Squaresteel}}$
\end{definition}
\vspace*{0.1cm}

\begin{definition} \label{def:desContractible}
	{\rm [Descriptively  contractible space]} $\mbox{}$ \\
	A descriptive proximity space is descriptive proximally contractible, or descriptively contractible for short,   provided, the identity map on it is descriptive proximally homotopic to a descriptive constant map.
	$\mbox{\textcolor{blue}{\Squaresteel}}$
\end{definition}
\vspace*{0.1cm}

\begin{definition}\label{def:degnerateDescrProxConstMap}
	{\rm [Degenerate Descriptive Constant Map]} $\mbox{}$\\
	Let  $(X,\delta_{\Phi_1})$ and $(Y,\delta_{\Phi_2})$ be  descriptive proximity spaces. Then a map $d: X \to Y$ is said to be a  degenerate descriptive constant, provided  $\Phi_2(d(x_0))=\Phi_2(d(x_1))$ for all $x_0,x_1 \in X$. 
\end{definition}
\vspace*{0.1cm}

From Def.~\ref{def:degnerateDescrProxConstMap}, observe that the degenerate descriptive constant map need not map every element to a fixed element, but instead it fixes the description. That is $\abs{\mathrm{im} \ d} \geq 1$ but $\abs{\Phi_2(d(X))}=1$ so that the image of $\Phi_2\circ d$ consists of a single element, say $\ast \in \mathbb{R}^n$  (see Figure~\ref{fig:degconst}). We say that $d$ is an ordinary descriptive constant map, provided   $\abs{\mathrm{im} \ d} = 1$.

\begin{figure}[!ht]
	\centering
	\begin{tikzcd}
		\centering
		X  \arrow[r, "d"] & Y \arrow[r, "\Phi_2"] & \{\ast\} \subset \mathbb{R}^n 
	\end{tikzcd}
	\caption{ $\Phi_2\circ d$ is a constant map on $X$, provided $d$ is  degenerate descriptive constant.}
	\label{fig:degconst}
\end{figure}

\begin{example}
	Let $H(X)$ denote the path homotopy classes in $X$ given in  Figure~\ref{fig:example} and  the paths in each of the homotopy class be described in terms of the color of their initial points. Then the identity map $id: (H(X), \Phi) \to (H(X),\Phi)$ is a degenerate descriptive constant map since the initial points of all  paths are red. 
\end{example}
\vspace*{0.1cm}

\begin{theorem}
A degenerate descriptive constant map is a dpc map.
\end{theorem}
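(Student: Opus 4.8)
The plan is to unwind the definition of a dpc map (Definition~\ref{def:dpc}) and verify the defining implication $A \ \delta_{\Phi_1} \ B \Rightarrow d(A) \ \delta_{\Phi_2} \ d(B)$ directly, observing along the way that the right-hand side in fact holds unconditionally. The one structural fact I rely on is the meaning of the descriptive proximity relation in a \v{C}ech descriptive proximity space: two nonempty subsets $P, Q$ satisfy $P \ \delta_{\Phi_2} \ Q$ precisely when their descriptions overlap, i.e. $\Phi_2(P) \cap \Phi_2(Q) \neq \emptyset$ (they share at least one feature vector).

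First I would fix arbitrary nonempty $A, B \subset X$ with $A \ \delta_{\Phi_1} \ B$. Since $d$ is a degenerate descriptive constant map (Definition~\ref{def:degnerateDescrProxConstMap}), there is a fixed $\ast \in \mathbb{R}^n$ with $\Phi_2(d(x)) = \ast$ for every $x \in X$; equivalently $\abs{\Phi_2(d(X))} = 1$. Applying this pointwise to the images gives $\Phi_2(d(A)) = \{\ast\}$ and $\Phi_2(d(B)) = \{\ast\}$.

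The key step is then immediate: $\Phi_2(d(A)) \cap \Phi_2(d(B)) = \{\ast\} \neq \emptyset$, so $d(A) \ \delta_{\Phi_2} \ d(B)$ by the definition of $\delta_{\Phi_2}$. Because this conclusion was reached without invoking the hypothesis $A \ \delta_{\Phi_1} \ B$ (only the nonemptiness of $A, B$ was used), the implication defining dpc is satisfied a fortiori, and hence $d$ is dpc.

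I expect no genuine obstacle here; the statement is essentially the observation that collapsing every description to a single feature vector forces any two images to be descriptively near, so the map is dpc ``for free.'' The only point needing care is the standing restriction to nonempty subsets, which is the convention already in force for the descriptive intersection $\dcap$ and the relation $\delta_{\Phi_2}$; I would simply note that the argument is carried out for nonempty $A, B$, matching that convention.
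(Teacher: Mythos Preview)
Your proposal is correct and follows essentially the same route as the paper: assume $A \ \delta_{\Phi_1} \ B$, use the degenerate descriptive constant property to get $\Phi_2(d(A)) = \Phi_2(d(B)) = \{\ast\}$, and conclude $d(A) \ \delta_{\Phi_2} \ d(B)$. Your version is a bit more explicit about why equality of descriptions forces descriptive nearness (via the nonempty intersection $\{\ast\}$), and you correctly note that the hypothesis $A \ \delta_{\Phi_1} \ B$ is never actually used, but the argument is the same as the paper's.
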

\begin{proof}
	For two subsets $A$ and $B$ of $X$, suppose that $A \ \delta_{\Phi_1} \ B$. From Def.~\ref{def:degnerateDescrProxConstMap} for a degenerate descriptive constant map, we have  $\Phi_2(d(A))=\Phi_2(d(B))$ so that $d(A) \ \delta_{\Phi_2} \ d(B)$, which completes the proof.
\end{proof}
\vspace*{0.1cm}

\begin{definition}\label{def:degdesContractible}	{\rm [Degenerate Descriptively Contractible Space]} $\mbox{}$\\
		A descriptive proximity space is a degenerate descriptively  contractible,   provided, the identity map on it is descriptive proximally homotopic to a degenerate descriptive constant map.
\end{definition}

%
\begin{proposition} \label{prop:descont}
	Suppose that $(X,\dnear)$ is a descriptive proximity space and  $c_d$  is  a degenerate descriptive constant map  on $X$ with $x_0 \in \text{Im}(c_d)$.  Then $c_d$ and the descriptive constant map  $c_{x_0}$  at $x_0$  are descriptively homotopic. 
\end{proposition}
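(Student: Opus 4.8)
The plan is to produce an explicit descriptive proximal homotopy $H\colon X\times[0,1]\to X$ from $c_d$ to $c_{x_0}$. The guiding idea is that descriptive proximal continuity is a condition on descriptions only: a map is dpc as soon as its composite with the probe $\Phi$ carries descriptively near sets to descriptively near sets, and in particular any map whose $\Phi$-image is a single point is automatically dpc.

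First I would record the key identity of descriptions. Since $c_d$ is degenerate descriptive constant (Definition~\ref{def:degnerateDescrProxConstMap}), $\abs{\Phi(c_d(X))}=1$; write $\Phi(c_d(x))=\ast$ for every $x\in X$. Because $x_0\in\text{Im}(c_d)$, there is an $x'\in X$ with $c_d(x')=x_0$, whence $\Phi(x_0)=\Phi(c_d(x'))=\ast$. Thus both $c_d$ and $c_{x_0}$ carry the single constant description $\ast$, i.e. $\Phi(c_d(x))=\Phi(x_0)=\ast$ for all $x$.

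Next I would take the candidate homotopy
\[
H(x,t)=\begin{cases} c_d(x), & t\in[0,\tfrac12],\\ x_0, & t\in(\tfrac12,1], \end{cases}
\]
which visibly satisfies $H(x,0)=c_d(x)$ and $H(x,1)=x_0=c_{x_0}(x)$. To see that $H$ is dpc, note that each value $H(x,t)$ is either $c_d(x)$ or $x_0$, so by the previous step $\Phi(H(x,t))=\ast$ on all of $X\times[0,1]$. Hence for arbitrary $P,Q\subseteq X\times[0,1]$ we get $\Phi(H(P))=\{\ast\}=\Phi(H(Q))$, so $\Phi(H(P))\cap\Phi(H(Q))=\{\ast\}\neq\emptyset$ and therefore $H(P)\ \dnear\ H(Q)$. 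In particular $P\ \dnear\ Q$ implies $H(P)\ \dnear\ H(Q)$, so $H$ is a descriptive proximal homotopy and $c_d\mathop{\sim}\limits_{\Phi}c_{x_0}$.

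The one subtlety — and the step a reader is most likely to stumble on — is realising that no gluing lemma or topological continuity at $t=\tfrac12$ is needed here: because $\Phi\circ H$ is constant, the piecewise map $H$ is dpc outright, and the verification is insensitive to the precise descriptive proximities on the target $X$ and on the product $X\times[0,1]$ (Remark~\ref{rem:descriptiveNearnessRelation}, Proposition~\ref{thm:dlpProduct}). If one instead insisted on genuine continuity of $H$, the argument would need $c_d(x)=x_0$ along the seam and would fail; the content of the proposition is precisely that descriptive homotopy only sees $\ast$.
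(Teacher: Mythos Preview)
Your argument is correct. The paper's own proof is a single line: it takes $H(x,t)=x_0$ for all $(x,t)$. That map is certainly dpc, but as written it does not meet the endpoint condition $H(x,0)=c_d(x)$ unless $c_d$ is already the ordinary constant at $x_0$. Your piecewise construction repairs exactly this defect while retaining the same underlying mechanism: since $x_0\in\text{Im}(c_d)$ forces $\Phi(x_0)=\ast$, the composite $\Phi\circ H$ is the constant $\ast$, and axiom~(\textbf{dP}.2) then gives $H(P)\ \dnear\ H(Q)$ for all nonempty $P,Q$. So both proofs turn on the observation that a map with one-point $\Phi$-image is automatically dpc; yours simply takes the boundary conditions of the homotopy at $t=0$ seriously, and your remark that no gluing or continuity at the seam is needed is precisely the point that makes the piecewise definition legitimate.
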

\begin{proof}
	The desired homotopy  $H: X \times I \to X$ is a map such that $H(x,t)=x_0$. 
\end{proof}
\vspace*{0.1cm}

Since descriptive proximal relation is transitive, we have the following corollary.

\begin{corollary} \label{cor:degenerate}
	A degenerate descriptively contractible space is also a descriptively contractible.
\end{corollary}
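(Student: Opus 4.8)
The plan is to chain together two facts already established in the excerpt: Proposition~\ref{prop:descont}, which says that a degenerate descriptive constant map $c_d$ is descriptively homotopic to the ordinary descriptive constant map $c_{x_0}$ at a point $x_0 \in \text{Im}(c_d)$, together with the transitivity half of Proposition~\ref{thm:LproximalHomotopy}, which establishes that $\mathop{\sim}\limits_{\Phi}$ is an equivalence relation. These are precisely the two ingredients hinted at in the sentence preceding the corollary, so the argument is a short transitivity composition rather than anything requiring new construction.

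First I would unpack the hypothesis. Let $(X,\dnear)$ be a degenerate descriptively contractible space. By Definition~\ref{def:degdesContractible}, this means the identity map $\mathrm{id}_X$ is descriptive proximally homotopic to some degenerate descriptive constant map $c_d$ on $X$; that is, $\mathrm{id}_X \mathop{\sim}\limits_{\Phi} c_d$. My goal, by Definition~\ref{def:desContractible}, is to exhibit an ordinary descriptive constant map $c_{x_0}$ with $\mathrm{id}_X \mathop{\sim}\limits_{\Phi} c_{x_0}$, which would show $X$ is descriptively contractible.

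Next I would produce the second homotopy. Choose any point $x_0 \in \text{Im}(c_d)$ (nonempty since $X$ is assumed nonempty). Proposition~\ref{prop:descont} then applies verbatim and yields $c_d \mathop{\sim}\limits_{\Phi} c_{x_0}$, where $c_{x_0}$ is the descriptive constant map at $x_0$. Finally I would invoke transitivity: by Proposition~\ref{thm:LproximalHomotopy}, $\mathop{\sim}\limits_{\Phi}$ is an equivalence relation, so from $\mathrm{id}_X \mathop{\sim}\limits_{\Phi} c_d$ and $c_d \mathop{\sim}\limits_{\Phi} c_{x_0}$ I conclude $\mathrm{id}_X \mathop{\sim}\limits_{\Phi} c_{x_0}$. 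Since $c_{x_0}$ is an ordinary descriptive constant map, Definition~\ref{def:desContractible} gives that $X$ is descriptively contractible.

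There is essentially no main obstacle here; the corollary is a direct formal consequence and the real work was already done in Proposition~\ref{prop:descont}. The only point worth a moment's care is the implicit nonemptiness of $X$ (so that $\text{Im}(c_d)$ admits a choice of $x_0$), which is harmless since a proximity space carrying a contractibility hypothesis is tacitly nonempty. I would state the proof in two or three sentences, emphasizing the transitivity step flagged in the lead-in remark.
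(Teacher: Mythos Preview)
Your proposal is correct and matches the paper's approach exactly: the paper gives no formal proof but simply notes in the preceding sentence that transitivity of $\mathop{\sim}\limits_{\Phi}$ (Proposition~\ref{thm:LproximalHomotopy}) together with Proposition~\ref{prop:descont} yields the corollary. Your write-up just makes explicit the two-step chain $\mathrm{id}_X \mathop{\sim}\limits_{\Phi} c_d \mathop{\sim}\limits_{\Phi} c_{x_0}$ that the paper leaves to the reader.
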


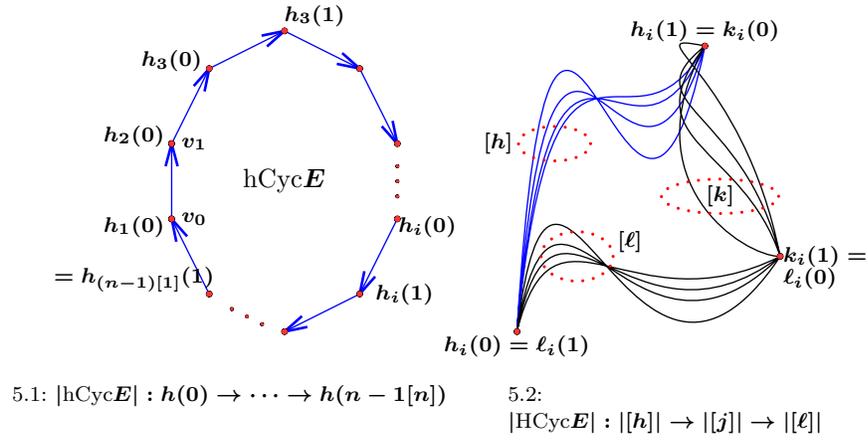
\begin{figure}[!ht]
\centering
\subfigure[$\boldsymbol{\abs{\hcyc E}: h(0)\to\cdots\to h(n-1[n])}$]
 {\label{fig:hCycle}
\begin{pspicture}
(-3.0,-1.0)(2.5,4.0)
\centering
\psline[linewidth=0.5pt,linecolor=blue,arrowscale=1.0]{-v}%
(-1.0,2)(-0.5,3)
\psline[linewidth=0.5pt,linecolor=blue,arrowscale=1.0]{-v}%
(-0.5,3)(0.5,3.5)
\psline[linewidth=0.5pt,linecolor=blue,arrowscale=1.0]{-v}%
(0.5,3.5)(1.5,3.0)
\psline[linewidth=0.5pt,linecolor=blue,arrowscale=1.0]{-v}%
(1.5,3.0)(2.0,2.0)
\psline[linewidth=0.5pt,linecolor=blue,arrowscale=1.0]{-v}%
(2.0,1.0)(1.5,0.0)
\psline[linewidth=0.5pt,linecolor=blue,arrowscale=1.0]{-v}%
(1.5,0.0)(0.5,-0.5)
\psline[linewidth=0.5pt,linecolor=blue,arrowscale=1.0]{-v}%
(-0.5,0.0)(-1.0,1.0)
\psline[linewidth=0.5pt,linecolor=blue,arrowscale=1.0]{-v}%
(-1.0,1.0)(-1.0,2)
\psdots[dotstyle=o,dotsize=2.5pt,linewidth=1.2pt,linecolor=black,fillcolor=red!80]
(-1.0,2)(-0.5,3)(0.5,3.5)(1.5,3.0)(2.0,2.0)(2.0,1.0)(1.5,0.0)%
(0.5,-0.5)(-0.5,0.0)(-1.0,1.0)(-1.0,1.0)(-1.0,2)
\rput(-0.7,2.0){\footnotesize $\boldsymbol{v_1}$}
\rput(-0.7,1.0){\footnotesize $\boldsymbol{v_0}$}
\psdots[dotstyle=o,dotsize=1.5pt,linewidth=1.2pt,linecolor=black,fillcolor=red!80]
(2.0,1.7)(2.0,1.5)(2.0,1.3)
\psdots[dotstyle=o,dotsize=1.5pt,linewidth=1.2pt,linecolor=black,fillcolor=red!80]
(0.2,-0.4)(0.0,-0.3)(-0.2,-0.2)
%
\rput(2.4,0.9){\footnotesize $\boldsymbol{h_{i}(0)}$} 
\rput(2.1,0){\footnotesize $\boldsymbol{h_{i}(1)}$}
\rput(-1.5,0.9){\footnotesize %
$\boldsymbol{h_{1}(0)}$} 
\rput(-1.5,0.2){\footnotesize %
$\boldsymbol{=h_{(n-1)[1]}(1)}$}
\rput(-1.5,2.1){\footnotesize $
\boldsymbol{h_2(0)}$}
\rput(-1.0,3.1){\footnotesize $
\boldsymbol{h_3(0)}$}
\rput(0.9,3.7){\footnotesize $\boldsymbol{h_3(1)}$}
\rput(0.5,1.5){$\boldsymbol{\hcyc E}$}
\end{pspicture}}\hfil
\subfigure[$\boldsymbol{\abs{\Hcyc E}:\abs{[h]}\to\abs{[j]}\to\abs{[\ell]}}$]
 {\label{fig:Hcyc}
	\begin{pspicture}
	(0.5,-0.5)(4.5,4.5)
	\pscustom[linewidth=0.5pt,linecolor=blue]{%
		\moveto(0.5,0)
		\curveto(1,5)(2,2)(3,3.8)
		\moveto(0.5,0)
		\curveto(1,5.5)(2,1.5)(3,3.8)
		\moveto(0.5,0)
		\curveto(1,6.5)(2,0.5)(3,3.8)
		\moveto(0.5,0)
		\curveto(1,8)(2,-1)(3,3.8)
	}
		\pscustom[linewidth=0.5pt,linecolor=black]{%
		\moveto(3,3.8)
		\curveto(2,2)(3.5,1)(4,1.0)
		\moveto(3,3.8)
		\curveto(2,2.5)(3.5,2.5)(4,1.0)
		\moveto(3,3.8)
		\curveto(2,3)(3.5,3)(4,1.0)
		\moveto(3,3.8)
		\curveto(2,4)(3.5,3.5)(4,1.0)
	}
	\pscustom[linewidth=0.5pt,linecolor=black]{%
		\moveto(0.5,0)
		\curveto(1,2)(2,0)(4,1.0)
		\moveto(0.5,0)
		\curveto(1,2.5)(2,-0.5)(4,1.0)
		\moveto(0.5,0)
		\curveto(1,3)(2,-1)(4,1.0)
		\moveto(0.5,0)
		\curveto(1,4)(2,-2)(4,1.0)
	}
	\psellipse[linecolor=red,linestyle=dotted,linewidth=1.2pt]%
	(1.0,2.5)(0.50,0.25)
	\psellipse[linecolor=red,linestyle=dotted,linewidth=1.2pt]%
	(3.2,1.8)(0.80,0.25)
	\psellipse[linecolor=red,linestyle=dotted,linewidth=1.2pt]%
	(1.3,1.0)(0.50,0.35)
\psdots[dotstyle=o,dotsize=2.5pt,linewidth=1.5pt,linecolor=black,fillcolor=red!80]
	(0.5,0)(3,3.8)(4,1.0)
	\rput(0.5,-0.2){\footnotesize {\color{black} $\boldsymbol{h_i(0)=\ell_i(1)}$}}
	\rput(3,4.0){\footnotesize {\color{black} $\boldsymbol{h_i(1)=k_i(0)}$}}
	\rput(4.6,1.0){\footnotesize {\color{black} $\boldsymbol{k_i(1)=}$}}
	\rput(4.4,0.7){\footnotesize {\color{black} $\boldsymbol{\ell_i(0)}$}}
	\rput(0.25,2.5){\footnotesize $\boldsymbol{[h]}$}
	\rput(3.2,1.8){\footnotesize $\boldsymbol{[k]}$}
	\rput(2.0,1.2){\footnotesize $\boldsymbol{[\ell]}$}
\end{pspicture}}
\caption[]{Two Forms of Homotopic cycles}
\label{fig:homotopicCycles}
\end{figure}

\begin{figure}[!ht]
	\centering
	\begin{pspicture}
	(-1.0,-0.5)(7.0,4.5)
	\pscustom[linewidth=0.5pt,linecolor=blue]{%
		\moveto(0.5,0)
		\curveto(1,5)(2,2)(3,3.8)
		\moveto(0.5,0)
		\curveto(1,5.5)(2,1.5)(3,3.8)
		\moveto(0.5,0)
		\curveto(1,6.5)(2,0.5)(3,3.8)
		\moveto(0.5,0)
		\curveto(1,8)(2,-1)(3,3.8)
	}
		\pscustom[linewidth=0.5pt,linecolor=black]{%
		\moveto(3,3.8)
		\curveto(2,2)(3.5,1)(4,1.0)
		\moveto(3,3.8)
		\curveto(2,2.5)(3.5,2.5)(4,1.0)
		\moveto(3,3.8)
		\curveto(2,3)(3.5,3)(4,1.0)
		\moveto(3,3.8)
		\curveto(2,4)(3.5,3.5)(4,1.0)
	}
	\pscustom[linewidth=0.5pt,linecolor=black]{%
		\moveto(0.5,0)
		\curveto(1,2)(2,0)(4,1.0)
		\moveto(0.5,0)
		\curveto(1,2.5)(2,-0.5)(4,1.0)
		\moveto(0.5,0)
		\curveto(1,3)(2,-1)(4,1.0)
		\moveto(0.5,0)
		\curveto(1,4)(2,-2)(4,1.0)
	}
	\psellipse[linecolor=red,linestyle=dotted,linewidth=1.2pt]%
	(1.0,2.5)(0.50,0.25)
	\psellipse[linecolor=red,linestyle=dotted,linewidth=1.2pt]%
	(3.2,1.8)(0.80,0.25)
	\psellipse[linecolor=red,linestyle=dotted,linewidth=1.2pt]%
	(1.3,1.0)(0.50,0.35)
\psdots[dotstyle=o,dotsize=2.5pt,linewidth=1.5pt,linecolor=black,fillcolor=red!80]
	(0.5,0)(3,3.8)(4,1.0)
	\rput(1.0,-0.2){\footnotesize {\color{black} $\boldsymbol{h_i(0)=\ell_i(1)}$}}
	\rput(3,4.0){\footnotesize {\color{black} $\boldsymbol{h_i(1)=k_i(0)}$}}
	\rput(5.6,0.8){\footnotesize {\color{black} $\boldsymbol{\ell_i(0)=}$}}
	\rput(4.48,0.4){\footnotesize {\color{black} $\boldsymbol{k'_j(1)}$}}
	\rput(1.2,3.8){\footnotesize {\color{black} \colorbox{gray!20}{$\boldsymbol{\Hcyc E}$}}}
	\rput(4.8,4.6){\footnotesize {\color{black} \colorbox{gray!20}{$\boldsymbol{\Hcyc E'}$}}}
	\rput(0.25,2.5){\footnotesize $\boldsymbol{[h]}$}
	\rput(3.2,1.8){\footnotesize $\boldsymbol{[k]}$}
	\rput(2.0,1.2){\footnotesize $\boldsymbol{[\ell]}$}
	\pscustom[linewidth=0.5pt,linecolor=blue]{%
		\moveto(4,1.0)
		\curveto(4.5,5)(5,2)(6,3.8)
		\moveto(4,1.0)
		\curveto(4.5,5.5)(5,1.5)(6,3.8)
		\moveto(4,1.0)
		\curveto(4.5,6.5)(5,0.5)(6,3.8)
		\moveto(4,1.0)
		\curveto(4.5,8)(5,-1)(6,3.8)
	}
	\pscustom[linewidth=0.5pt,linecolor=black]{%
		\moveto(4,1.0)
		\curveto(4.5,2)(5,0)(6,3.8)
		\moveto(4,1.0)
		\curveto(4.5,2.5)(5,-0.5)(6,3.8)
		\moveto(4,1.0)
		\curveto(4.5,3)(5,-2)(6,3.8)
		\moveto(4,1.0)
		\curveto(4.5,8)(5,-2)(6,3.8)
	}
	\psellipse[linecolor=red,linestyle=dotted,linewidth=1.2pt]%
	(5.5,1.5)(0.50,0.25)
\rput(5.66,1.5){\footnotesize $\boldsymbol{[k']}$}
	\psellipse[linecolor=red,linestyle=dotted,linewidth=1.2pt]%
	(4.2,2.5)(0.50,0.25)
\rput(3.80,2.5){\footnotesize $\boldsymbol{[h']}$}
\psdots[dotstyle=o,dotsize=2.5pt,linewidth=1.5pt,linecolor=black,fillcolor=red!80]
	(0.5,0)(3,3.8)(6,3.8)
	\rput(6,4.0){\footnotesize {\color{black} $\boldsymbol{h'_i(1)=k'_j(0)}$}}
	\end{pspicture}
	\caption[]{$\boldsymbol{\abs{\hSys E}}$, a homotopic cycle system.}
	\label{fig:HcNerve}
\end{figure}

\section{Homotopic Cycles}
This section introduces three forms of homotopic cycles, namely, simple homotopic cycles, multi-homotopic cycles and homotopic cycle systems. Geometrically, a homotopic cycle has the appearance of the boundary of a Vigolo Hawaiian earring~\cite{Vigolo2018HawaiianEarrings}. These homotopic cycles lead to extensions of the Jordan Curve Theorem.\\

Recall that a path in a space $X$ is a continuous map $h:I\to X$~\cite[\S 2.1,p.11]{Switzer2002CWcomplex}.

\begin{definition}\label{def:filledCycle} {\rm [Simple Homotopic Cycle] } $\mbox{}$\\
In a space $X$ in the Euclidean plane, let $h:I\to X$ be a path (briefly, hpath). A homotopic cycle $E$ (denoted by $\hcyc E$) is a collection of hpath-connected vertexes attached to each other with no end vertex and $\hcyc E$ has a nonvoid interior.
\qquad\textcolor{blue}{\Squaresteel}
\end{definition}
\vspace*{0.1cm}

\begin{example}
A geometric realization of a simple homotopic cycle $\abs{\hcyc E}$ is shown in Fig.~\ref{fig:hCycle}.  Each edge in $\abs{\hcyc E}$ is an hpath $\abs{h_i}, i\in [0,\dots,n-1[n]]$. 
\qquad\textcolor{blue}{\Squaresteel}
\end{example}
\vspace*{0.1cm}

An enriched form of a homotopic cycle is derived from the paths in homotopic classes that provide path-connected cycle vertexes.
\vspace*{0.1cm}

\begin{definition}\label{def:HCycle} {\rm  [Multi-Path Homotopic Cycle]} $\mbox{}$\\
In a space $X$ in the Euclidean plane, let $[h]$ be a homotopic class containing multiple hpaths. A multi-path homotopic cycle $E$ (denoted by $\Hcyc E$) is a collection of homotopic classes containing hpaths-   connected vertexes attached to each other with no end vertex and $\Hcyc E$ has a nonvoid interior.
\qquad\textcolor{blue}{\Squaresteel}
\end{definition}
\vspace*{0.1cm}

\begin{example}
A geometric realization of a multi-path homotopic cycle $\abs{\Hcyc E}$ is shown in Fig.~\ref{fig:Hcyc}.  There are multiple homotopic paths between each pair of vertexes in $\abs{\Hcyc E}$. For example, between vertexes $\abs{h_i(0)},\abs{h_i(1)}$, there are multiple h-paths in class $[h]$.
\qquad\textcolor{blue}{\Squaresteel}
\end{example}
\vspace*{0.1cm}

A system of homotopic cycles results from a collection of $\Hcyc$-cycles that have nonvoid intersection.
\vspace*{0.1cm}

For a space $X$ in the Euclidean plane, let $H(X)$ denote the set of all path homotopy classes $[h]$ in $X$.
\vspace*{0.1cm}

\begin{definition}\label{def:filledCycle2} {\rm  [Homotopic Cycle System]} $\mbox{}$\\
In a space $X$ in the Euclidean plane, a homotopic cycle system $E$ (denoted by $\hSys E$) is a collection of $\Hcyc$-cycles such that
\[
\hSys E = \left\{\Hcyc E\in 2^{H(X)}    : \bigcap \Hcyc E = \mbox{vertex}\ v\in  H(X)  \right\}.\ \mbox{\qquad\textcolor{blue}{\Squaresteel}}
\]
\end{definition}
\vspace*{0.1cm}

\begin{example}
A geometric realization of a homotopic cycle system $\abs{\hSys E}$ is shown in Fig.~\ref{fig:HcNerve}.  This system contains a pair of multi-homotopic cycles $\Hcyc E$, $\Hcyc E'$ attached to each other, {\em i.e.}, we have
\begin{align*}
\ell_i &\in [\ell]\in \Hcyc E,\\
k'_j &\in [k]\in \Hcyc E',\\
\hSys E &= \{ \Hcyc E, \Hcyc E' \}\\
\Hcyc E\cap\Hcyc E' &= \ell_i(0) = k'_j(1).
 \mbox{\qquad\textcolor{blue}{\Squaresteel}} 
\end{align*}
\end{example}
\vspace*{0.1cm}

\section{Good coverings and Jordan Curve Theorem extension}
This section introduces good coverings of descriptive proximity spaces and an extension of the Jordan Curve Theorem in terms of the boundary of a homotopic cycle.
\vspace*{0.1cm}

\begin{definition}\label{def:CycleClosure} {\rm [Closure in a Hausdorff metric space} ]. $\mbox{}$\\
Let $A\in 2^X$ (nonvoid subset $A$ in a Hausdorff metric space~\cite{Hausdorff1914,Hausdorff1914b} $X$) and $D(x,A) = \inf\left\{\abs{x-a} :   a\in A  \right\}$ be the Hausdorff distance between a point $x\in X$ and subset $A$~\cite[\S 22,p. 128]{Hausdorff1914b}.  The closure of $A$~\cite[\S 1.18,p. 40]{DBLP:series/isrl/2014-63} is defined by
\[
\cl(A) = \left\{x\in X: D(x,A) = 0\right\}.\ \mbox{\qquad\textcolor{blue}{\Squaresteel}}
\]
\end{definition}
\vspace*{0.1cm}

\begin{definition}\label{def:closure} For a Hausdorff metric  space $X, A\in 2^X$, let $\cl A$ be the closure of $A$.  Then the boundary of $A$ (denoted by $\bdy A$) is the set of all points on the border of $\cl A$ and not in the complement of $\cl A$ (denoted by $\partial \cl A$).  Also, the interior of $A$ (denoted by $\Int A$) is the set of all points in $\cl A$ and not on the boundary of $A$, {\em i.e},
\begin{align*}
\partial(\cl A) &= X\setminus \cl A,\ \mbox{all points in $X$ and not in $\cl A$}.\\
\Int(A) &= \left\{E\in 2^X: E\subset \cl A\ \mbox{and}\ E \cap \bdy A = \emptyset  \right\}.\\
\bdy(A) &= X\setminus (\Int A\cup \partial \cl A).\\ 
\cl A &= \bdy(A)\cup\Int(A).\ \mbox{\qquad\textcolor{blue}{\Squaresteel}}
\end{align*}
\end{definition} 
\vspace*{0.1cm}

\begin{remark}
Geometrically, a homotopic cycle system is a necklace. The clasp of the necklace is the vertex in the intersection of the system cycles. This is the case in Fig.~\ref{fig:HcNerve}.
\qquad \textcolor{blue}{\Squaresteel}
\end{remark}
\vspace*{0.1cm}

Recall that a {\bf cover} of a space $X$ is a collection of subsets $E\in 2^X$ such that $X=\bigcup E$~\cite[\S 15.9,p. 104
]{Willard1970}. \\
 \vspace*{0.1cm}


\begin{definition}\label{def:goodCover}{\rm ~\cite[\S 4,p. 12]{Tanaka2021TiAgoodCover}}.\\
A cover of a space $X$ is a {\bf good cover}, provided, $X$ 
has a collection of subsets $E\in 2^X$ such that $X=\bigcup E$ and 
$\mathop{\bigcap}\limits_{\mbox{finite}} 
E\neq \emptyset$
is contractible,
 {\em i.e.}, all nonvoid intersections of the finitely many subsets $E\in 2^X$  are  contractible.
\qquad \textcolor{blue}{\Squaresteel}
\end{definition}
\vspace*{0.1cm}

\begin{example}
For a space $X$ in  the Euclidean plane, let $\hSys E = \left\{\Hcyc E,\Hcyc E'\right\}$ a system of homotopic cycles in $X$ with nonempty intersection such that  a  geometric realization $\abs{\hSys E}$  is shown in Fig.~\ref{fig:HcNerve}.
This is an example of planar Tanaka good cover of a $H(X)$, since  
\begin{align*}
H(X) &= \Hcyc E\cup\Hcyc E',\ \mbox{and}\\
\Hcyc E\cap\Hcyc E' & = \ell_i(0).\ \mbox{\qquad \textcolor{blue}{\Squaresteel}} 
\end{align*}
\end{example}
\vspace*{0.1cm}



\begin{definition}\label{def:desgoodCover} 
	{\rm [Descriptively good cover]}$\mbox{}$ \\
	Let $X$ be a descriptive proximity space  with a probe function $\Phi: 2^X \to \mathbb{R}^n$. A descriptively good cover of $(X,\Phi)$ is a collection of subsets $E \in 2^X$  such that $X = \bigcup E$ and $\displaystyle \mathop{\bigcap}\limits_{ \Phi, \mbox{finite}}  E \neq \emptyset$, i.e., all nonvoid descriptive intersections of the finitely many subsets $E \in 2^X$ are descriptively contractible. \qquad \textcolor{blue}{\Squaresteel}
\end{definition}
\vspace*{0.1cm}

\begin{definition}\label{def:degdesgoodCover} 
	{\rm [Degenerate Descriptively good cover]}\\
	Let $X$ be a descriptive proximity space  with a probe function $\Phi: 2^X \to \mathbb{R}^n$. A degenerate descriptively good cover of $(X,\Phi)$ is a collection of subsets $E \in 2^X$  such that $X = \bigcup E$ and $\displaystyle \mathop{\bigcap}\limits_{ \Phi, \mbox{finite}}  E \neq \emptyset$ is degenerate descriptively contractible, i.e., all nonvoid descriptive intersections of the finitely many subsets $E \in 2^X$ are  degenerate descriptively contractible. \qquad \textcolor{blue}{\Squaresteel}
\end{definition}
\vspace*{0.1cm}



\begin{proposition}
For a space $X$ in the Euclidean plane, $\hSys E$ is a good cover of $H(X)$.
\end{proposition}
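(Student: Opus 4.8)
The plan is to check the two conditions in the definition of a good cover (Definition~\ref{def:goodCover}) for the family $\hSys E$, regarded as a collection of subsets of $H(X)$: the covering identity $H(X)=\bigcup \hSys E$ and the contractibility of every nonvoid finite intersection of its members. I would follow exactly the template already used in the example immediately preceding this proposition, where $\hSys E=\{\Hcyc E,\Hcyc E'\}$ is declared a planar Tanaka good cover on the strength of $H(X)=\Hcyc E\cup\Hcyc E'$ together with $\Hcyc E\cap\Hcyc E'=\ell_i(0)$.

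First I would establish the covering identity. By Definition~\ref{def:filledCycle2} each member of $\hSys E$ is a multi-path homotopic cycle $\Hcyc E\in 2^{H(X)}$, and the system is assembled out of the homotopy classes of $H(X)$. Since every path homotopy class $[h]\in H(X)$ is one of the vertex-connecting bundles making up at least one cycle of the system, taking the union over all $\Hcyc E\in\hSys E$ recovers every class, giving $H(X)=\bigcup \hSys E$.

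Next I would verify the intersection condition. By the defining property of $\hSys E$ in Definition~\ref{def:filledCycle2}, the nonvoid intersection of the cycles of the system is a single vertex $v\in H(X)$ (the clasp of the necklace), exactly as in Fig.~\ref{fig:HcNerve} where $\Hcyc E\cap\Hcyc E'=\ell_i(0)=k'_j(1)$. A single vertex is a one-point space and hence contractible via the constant homotopy, so every nonvoid finite intersection of members of $\hSys E$ reduces to such a vertex and is contractible. This is the second requirement of Definition~\ref{def:goodCover}, and the two conditions together show that $\hSys E$ is a good cover of $H(X)$.

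The step I expect to be the main obstacle is conceptual rather than computational: an individual multi-path homotopic cycle bounds a region with nonvoid interior (Definition~\ref{def:HCycle}) and is therefore circle-like and not contractible, in contrast to the single-vertex intersections. The care needed is to read the good-cover condition of Definition~\ref{def:goodCover} as a requirement on the nonvoid \emph{intersections} of the covering cycles --- which the structure of $\hSys E$ forces to be single vertices --- rather than on the covering cycles themselves; this is precisely the reading adopted in the preceding example, and under it the verification above is complete.
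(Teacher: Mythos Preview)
Your proposal is correct and follows essentially the same approach as the paper: you verify that each $\Hcyc E\in\hSys E$ lies in $2^{H(X)}$, that $H(X)=\bigcup\Hcyc E$, and that by Definition~\ref{def:filledCycle2} the intersection $\bigcap\Hcyc E$ is a single vertex and hence contractible. The paper's proof is a one-sentence version of exactly this argument; your added paragraph distinguishing the contractibility of intersections from that of individual cycles is a helpful clarification but not a departure in method.
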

\begin{proof}
Observe that each element $\Hcyc E$ in  $\hSys E$ is a subset of $H(X)$ and   by the nature of $\hSys E$,  $H(X)=\bigcup \Hcyc E$ and $\bigcap \Hcyc E$ is a single vertex so that it is contractible. 
\end{proof}
\vspace*{0.1cm}

\begin{theorem}\label{theorem:EHnerve}{\rm~\cite[\S III.2,p. 59]{Edelsbrunner1999}}.\\
Let $F$ be a finite collection of closed, convex sets in Euclidean space.  Then the nerve of $F$ and union of the sets in $F$ have the same homotopy type.
\end{theorem}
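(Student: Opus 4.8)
The plan is to deduce the statement from the general Nerve Lemma, using convexity only to verify the good-cover hypothesis. Write $U=\bigcup F$ for the union. Since an intersection of convex sets is again convex and every nonempty convex set is star-shaped about any of its points (hence contractible), each nonempty finite intersection $F_{i_0}\cap\cdots\cap F_{i_k}$ of members of $F$ is either empty or contractible. Thus $F$ is a good cover of $U$ in the sense of Definition~\ref{def:goodCover}, and the theorem reduces to the assertion that a finite good cover has nerve $N(F)$ homotopy equivalent to its union.

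To produce the equivalence I would introduce a comparison space interpolating between $U$ and the geometric realization $\abs{N(F)}$, namely the Mayer--Vietoris blowup $B$ (equivalently the homotopy colimit of the diagram $\sigma \mapsto F_\sigma := \bigcap_{i\in\sigma} F_i$ indexed over the simplices $\sigma$ of $N(F)$). This $B$ carries two canonical maps: the projection $p\colon B \to U$ recording the underlying point, and the projection $q\colon B \to \abs{N(F)}$ recording the simplex coordinate. The argument then splits into showing that each leg is a homotopy equivalence.

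The map $q$ is a homotopy equivalence by the Projection Lemma for homotopy colimits: every $F_\sigma$ is contractible, so the diagram is objectwise equivalent to the constant one-point diagram, whose homotopy colimit is exactly $\abs{N(F)}$. For $p$, the fiber over a point $x\in U$ is the realization of the nerve of the subcollection $\{F_i : x\in F_i\}$; because all of these members contain $x$, every one of their intersections is nonempty, so this nerve is a full simplex and hence contractible. Contractible fibers together with the local product structure of the blowup give that $p$ is a homotopy equivalence, and composing $q$ with a homotopy inverse of $p$ yields $U \simeq \abs{N(F)}$.

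The main obstacle is point-set rather than homotopy-theoretic: since $F$ consists of \emph{closed} sets, I must ensure the blowup genuinely retracts as claimed and that $p$ is an honest (not merely weak) homotopy equivalence. A clean way to dispatch this is to replace each closed convex $F_i$ by the open neighborhood $F_i^{\varepsilon}=\{x : D(x,F_i)<\varepsilon\}$, which is again open and convex (it is the Minkowski sum of $F_i$ with an $\varepsilon$-ball), apply the standard open-cover Nerve Lemma, and use that each $F_i$ is a deformation retract of $F_i^{\varepsilon}$ by convexity to transfer the conclusion back to $F$. The delicate point here is that a single $\varepsilon$ must be chosen so that no spurious simplices appear, i.e. so that the two nerves agree; this is automatic when the sets are compact or pairwise at positive distance, but for unbounded closed convex sets two disjoint members can have distance zero, so the fully general closed case must instead be handled directly through the blowup $B$, with care taken over the closed subspace topology. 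Verifying this separation condition, or otherwise controlling the blowup in the closed setting, is the step that requires the most attention.
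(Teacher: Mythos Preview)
The paper does not prove Theorem~\ref{theorem:EHnerve} at all: it is quoted verbatim from Edelsbrunner~\cite[\S III.2, p.~59]{Edelsbrunner1999} and used as a black box (specifically, to obtain part~4$^o$ of Theorem~\ref{theorem:desGoodCover}). There is therefore no ``paper's own proof'' to compare against; any argument you supply goes strictly beyond what the paper does.

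That said, your sketch is the standard one and is sound in outline. Reducing to the Nerve Lemma via the observation that finite intersections of convex sets are convex (hence contractible) is exactly the point of the convexity hypothesis, and the Mayer--Vietoris blowup / homotopy-colimit comparison is the usual machinery. Your self-identified obstacle is real but not fatal in this setting: because the cover is \emph{finite} and the sets are convex, one can build a partition of unity subordinate to $F$ directly (e.g.\ via $\phi_i(x)=D(x,U\setminus F_i)/\sum_j D(x,U\setminus F_j)$, or in Edelsbrunner's own treatment via the convex structure), and the map $U\to\abs{N(F)}$ given by barycentric coordinates $(\phi_1,\dots,\phi_n)$ furnishes the homotopy equivalence without passing through an open thickening. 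So the $\varepsilon$-thickening route and its attendant worry about spurious simplices can simply be avoided.
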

\vspace*{0.1cm}

Let $\mathop{\angle}\limits^{~}_{\kappa}bac$ denote the inner angle of a geodesic triangle of length $\abs{ab},\abs{bc},\abs{ca}$, at the vertex with opposite side of length $\abs{bc}$, in a simply connected complete surface of curvature $\kappa$.

A geodesic complete metric space M is an {\bf Alexandrov space} (of curvature bounded locally from below) 
~\cite[\S 2.1,p. 3]{MitsuisheYamaguchi209goodCover}, provided, for each $p\in M$, there exist an $r>0$ and $\kappa\in\mathbb{R}$ such that for any distinct four points $a_i\in B(p,r), i = 1,2,3,4$ with $\mbox{max}_{1\leq i<j\leq 3}\left\{\abs{a_0a_i}+\abs{a_0a_j}+\abs{a_ia_j}\right\}<\frac{\pi}{\sqrt{\kappa}}$, if $\kappa > 0$,
we have
\[
\mathop{\sum}\limits_{1\leq i\leq j\leq 3} \mathop{\angle}\limits^{~}_{\kappa}a_ia_oa_j\leq 2\pi.
\]


\begin{proposition}\label{prop:AlexandrovSpace}
For a closed subset $X$  in the Euclidean plane with a probe function $\Phi$,
the descriptive proximity space $\left(X,\dnear\right)$  is an Alexandrov space.
\end{proposition}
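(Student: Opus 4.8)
The plan is to show that the topology that the descriptive proximity $\dnear$ induces on $X$ is Alexandrov-discrete, i.e.\ that arbitrary intersections of open sets are open (equivalently, that every point possesses a smallest open neighbourhood). First I would recall that a proximity determines a closure operator via $x\in\cl_{\Phi}A \Leftrightarrow x\ \dnear\ A$, and that, by the description of the descriptive closure recorded earlier, $\cl_{\Phi}A=\{x\in X:\Phi(x)\in\Phi(A)\}=\Phi^{-1}(\Phi(A))$. A direct check then confirms the Kuratowski axioms: $\cl_{\Phi}\emptyset=\emptyset$; $A\subseteq\cl_{\Phi}A$ since $\Phi(a)\in\Phi(A)$ for $a\in A$; $\cl_{\Phi}(\cl_{\Phi}A)=\cl_{\Phi}A$ because $\Phi(\Phi^{-1}(\Phi(A)))=\Phi(A)$; and $\cl_{\Phi}(A\cup B)=\cl_{\Phi}A\cup\cl_{\Phi}B$ because $\Phi$ and $\Phi^{-1}$ both carry unions to unions. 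Hence $\cl_{\Phi}$ defines a genuine topology on $X$, namely the topology associated with the descriptive proximity.

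Next I would identify the closed and open sets of this topology. Introduce the equivalence $x\equiv y \Leftrightarrow \Phi(x)=\Phi(y)$, whose classes are the fibres $\Phi^{-1}(\Phi(x))$ of the probe. The identity $\cl_{\Phi}A=\Phi^{-1}(\Phi(A))$ shows that $A$ is descriptively closed exactly when $A=\Phi^{-1}(\Phi(A))$, i.e.\ when $A$ is a union of $\equiv$-classes ($\Phi$-saturated). A short argument shows the complement of a $\Phi$-saturated set is again $\Phi$-saturated, so the open sets coincide with the $\Phi$-saturated sets; the descriptive topology is thus the partition topology determined by the fibres of $\Phi$.

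The Alexandrov property then follows immediately. Given any family $\{U_j\}_{j\in J}$ of open sets, each $U_j$ is a union of $\Phi$-fibres, and a fibre lies in $\bigcap_{j\in J}U_j$ precisely when it lies in every $U_j$; therefore $\bigcap_{j\in J}U_j$ is again a union of fibres, hence open. Thus arbitrary intersections of open sets are open, which is the defining condition for a (topological) Alexandrov space. Equivalently, I would exhibit the minimal open neighbourhood of each point $x$, namely its fibre $\Phi^{-1}(\Phi(x))=\cl_{\Phi}\{x\}$: it is open (being $\Phi$-saturated) and is contained in every open set that contains $x$, so it is the smallest open neighbourhood of $x$, and existence of such minimal neighbourhoods at every point characterises Alexandrov-discreteness.

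The only genuine subtlety, and the step I expect to require the most care, is reconciliation of terminology: the object called an \emph{Alexandrov space} in the statement is the topological, Alexandrov-discrete one characterised by closure under arbitrary intersections of open sets, rather than the metric, curvature-bounded-below notion recalled just above. I would therefore make explicit that the descriptive proximity collapses the ambient Euclidean topology on $X$ to the coarser partition topology of $\Phi$-fibres, and that it is precisely this topology which satisfies the Alexandrov condition; the hypotheses that $X$ be closed in the plane play no role in the argument. Beyond that clarification the reasoning is purely formal, driven by the identity $\cl_{\Phi}A=\Phi^{-1}(\Phi(A))$ together with the commutation of $\Phi^{-1}$ with arbitrary unions and intersections.
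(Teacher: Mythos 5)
There is a genuine mismatch here, and you flagged it yourself but then resolved it in the wrong direction. The ``Alexandrov space'' of the proposition is \emph{not} the Alexandrov-discrete topological notion (arbitrary intersections of opens are open); it is the metric-geometry notion the paper defines in the paragraph immediately preceding the proposition, following Mitsuishi--Yamaguchi: a geodesic complete metric space whose curvature is bounded locally from below, expressed via the comparison-angle inequality $\sum_{1\leq i\leq j\leq 3}\mathop{\angle}\limits^{~}_{\kappa}a_ia_0a_j\leq 2\pi$. That this is the intended meaning is confirmed by how the proposition is used: Theorem~\ref{theorem:desGoodCover}(3$^o$) invokes it together with Theorem~\ref{theorem:Mitsuishi-Yamaguchi}, the Mitsuishi--Yamaguchi good covering theorem, which is a statement about curvature-bounded-below Alexandrov spaces, not about Alexandrov-discrete topologies. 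Accordingly, the paper's own proof consists of exactly the two ingredients your argument omits: completeness of $X$ (here the hypothesis that $X$ is \emph{closed} in the Euclidean plane does real work, contrary to your remark that it plays no role), and a verification of the angle-comparison condition, carried out by taking $p\in X$, points $a_1,a_2,a_3$ on the boundary of $B(p,1)$, $\kappa=1$, checking $\abs{pa_1}+\abs{pa_2}+\abs{pa_3}=3\leq \pi/\sqrt{1}$, and observing that the three angles at $p$ sum to $2\pi$.

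Your partition-topology argument is internally sound as far as it goes: the identity $\cl_\Phi A=\Phi^{-1}(\Phi(A))$ does yield a Kuratowski closure whose open sets are the $\Phi$-saturated sets, and that topology is indeed closed under arbitrary intersections of opens. But it proves a different theorem, and one that cannot feed into the paper's subsequent argument, since nothing in the Alexandrov-discrete property gives the geodesic/curvature structure that Theorem~\ref{theorem:Mitsuishi-Yamaguchi} requires. One fair observation survives from your proposal: the descriptive proximity $\dnear$ and the probe $\Phi$ play essentially no role in the paper's own proof either, which is really a statement about $X$ as a metric subspace of the plane; but repairing that looseness would mean engaging with the metric definition (completeness, geodesics, the comparison inequality for all admissible quadruples near each point), not replacing it with the topological one.
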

\begin{proof}
	$X$ is complete since it is a closed subset of the Euclidean plane. For an element $p \in X$, consider the unit ball $B(p,1)$ and take the points $a_1, a_2, a_3$ on the boundary of $B(p,1)$ and let $\kappa=1$ 
	(the curvature of $B(p,1)$, the reciprocal of the radius)
	Then $\abs{pa_1} + \abs{pa_2} + \abs{pa_3}=3 \leq \frac{\pi}{\sqrt{1}}$ and we have $\mathop{\angle}\limits^{~}_{\kappa}a_1pa_2 +  \mathop{\angle}\limits^{~}_{\kappa}a_1pa_3 + \mathop{\angle}\limits^{~}_{\kappa}a_2pa_3 = 2\pi$.
\end{proof}
\vspace*{0.1cm}

A main result in this paper is a extension of the Mitsuishi-Yamaguchi Theorem~\ref{theorem:Mitsuishi-Yamaguchi}.

\begin{theorem}\label{theorem:Mitsuishi-Yamaguchi}{\rm ~\cite[Theorem 1.1(2),p. 8108]{MitsuisheYamaguchi209goodCover}}.\\
Every open covering $\gamma$ of an Alexandrov space $M$ has the same homotopy type as the nerve of any good covering of $M$.
\end{theorem}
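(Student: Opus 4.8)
The plan is to read the statement as the classical Nerve Theorem transported to the Alexandrov setting: for a good covering $\mathcal{U}$ of $M$ the geometric realization $\abs{\Nrv \mathcal{U}}$ is homotopy equivalent to $M$ itself, and since the homotopy type of $M$ is an invariant independent of which covering is used to probe it, an arbitrary open covering $\gamma$ and the nerve of any good covering both register the same homotopy type. By Proposition~\ref{prop:AlexandrovSpace} the spaces in play are Alexandrov spaces, and the structural facts I would lean on are that an Alexandrov space of curvature bounded below is finite-dimensional, locally contractible, and paracompact, so that subordinate partitions of unity exist.

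First I would fix a good covering $\mathcal{U} = \{U_i\}_{i\in J}$ of $M$ by open sets, so that every nonvoid finite intersection $U_{i_0}\cap\cdots\cap U_{i_k}$ is contractible (Definition~\ref{def:goodCover}). Using paracompactness of $M$, choose a partition of unity $\{\varphi_i\}$ subordinate to $\mathcal{U}$ and define the canonical nerve map
\[
\pi\colon M \to \abs{\Nrv \mathcal{U}}, \qquad \pi(x)=\sum_i \varphi_i(x)\, v_i,
\]
where $v_i$ is the vertex of $\Nrv \mathcal{U}$ indexed by $U_i$; the support condition guarantees that $\pi(x)$ lies in the simplex spanned by those $v_i$ with $x\in U_i$, so $\pi$ is well defined and continuous.

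To show $\pi$ is a homotopy equivalence I would compare both $M$ and $\abs{\Nrv \mathcal{U}}$ with the homotopy colimit of the diagram $D$ that assigns to each simplex $\{i_0,\dots,i_k\}$ of $\Nrv \mathcal{U}$ the intersection $U_{i_0}\cap\cdots\cap U_{i_k}$. There are two projections out of this homotopy colimit: the map to $M$ induced by the inclusions, which is an equivalence because $\mathcal{U}$ is an open cover of a paracompact space, and the map to $\abs{\Nrv \mathcal{U}}$ collapsing each value of $D$ to its indexing simplex, which is an equivalence because every $U_{i_0}\cap\cdots\cap U_{i_k}$ is contractible by the good-cover hypothesis. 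Composing one equivalence with a homotopy inverse of the other yields $M \simeq \abs{\Nrv \mathcal{U}}$, and one checks this composite agrees up to homotopy with $\pi$. An equivalent route, closer in spirit to Theorem~\ref{theorem:EHnerve}, is induction on $\abs{J}$ via a homotopy-pushout decomposition of the union, using contractibility of the finite intersections at each stage to match the pushout with the corresponding subdivision of the nerve.

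The hard part will be the regularity inputs that legitimize the abstract argument for Alexandrov spaces, as opposed to the finite families of closed convex sets handled by Theorem~\ref{theorem:EHnerve}: namely, that $M$ admits partitions of unity subordinate to the good cover, and that the homotopy-colimit projection onto $M$ is an equivalence. Both rest on finite-dimensionality, local contractibility, and paracompactness of curvature-bounded-below Alexandrov spaces; granted these, the contractibility of all nonvoid finite intersections supplied by the good cover does the remaining work. A secondary subtlety is reconciling the arbitrary open covering $\gamma$ with the good covering named in the statement: since each side computes the homotopy type of the fixed space $M$, one routes the comparison through $M$ as the common intermediary rather than comparing $\gamma$ and the good cover directly.
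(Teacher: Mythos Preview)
The paper does not supply a proof of this theorem: it is quoted verbatim from Mitsuishi--Yamaguchi with a citation and then used as a black box in the proof of Theorem~\ref{theorem:desGoodCover}, part~3$^o$. So there is no in-paper argument to compare your proposal against.

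That said, your sketch is the standard Nerve Theorem strategy (partition of unity map into $\abs{\Nrv\,\mathcal{U}}$, comparison via the homotopy colimit of the intersection diagram), and this is indeed the route taken in the literature for good covers of paracompact spaces. Two remarks. First, for the homotopy-colimit comparison you only need paracompactness (so that the cover is numerable) together with contractibility of the nonempty finite intersections; the additional Alexandrov inputs you list---finite-dimensionality, local contractibility---are used by Mitsuishi--Yamaguchi to \emph{produce} good covers (their part (1)), not to run the nerve argument itself. Second, the statement as phrased here is awkward: an arbitrary open covering $\gamma$ does not itself carry a well-defined homotopy type, and its nerve need not be equivalent to $M$. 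Your reading---that ``the open covering $\gamma$'' really stands for its union $M$, so that the content is $M\simeq\abs{\Nrv\,\mathcal{U}}$ for any good cover $\mathcal{U}$---is the sensible one, and your final paragraph handles it correctly by routing through $M$; just be explicit that $\gamma$ plays no role beyond naming the space being covered.
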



\begin{proposition}\label{prop:proxOpenCover}
Every descriptive proximity space $\left(X,\dnear\right)$ with a probe function $\Phi: 2^X \to \mathbb{R}^n$ in the Euclidean plane has an open covering.
\end{proposition}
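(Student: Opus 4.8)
The plan is to exhibit a concrete open covering built from the Euclidean metric that $X$ inherits as a subset of the plane. First I would observe that, although $(X,\dnear)$ is presented as a descriptive proximity space, its points live in $\mathbb{R}^2$, so the Euclidean distance $D(x,y)=\abs{x-y}$ restricts to a genuine metric on $X$. By Remark~\ref{rem:descriptiveNearnessRelation} the descriptive nearness relation on the relevant sets is calibrated so that $D(A,B)=0$ governs closeness, i.e.\ the descriptive proximity and the metric proximity coincide; consequently the topology underlying $(X,\dnear)$ is exactly the Euclidean subspace topology, in which metric balls are open.

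Next I would construct the cover. For each point $p\in X$ choose a radius $r_p>0$ (for definiteness one may take every $r_p=1$, matching the unit balls used in Proposition~\ref{prop:AlexandrovSpace}) and set
\[
B(p,r_p)=\left\{x\in X: D(p,x)<r_p\right\}.
\]
Then $\gamma=\left\{B(p,r_p):p\in X\right\}$ is the desired family. Each $B(p,r_p)$ is open in the subspace topology, and since $p\in B(p,r_p)$ for every $p$, we have $X\subseteq\bigcup_{p\in X}B(p,r_p)\subseteq X$, so $X=\bigcup\gamma$. Hence $\gamma$ is an open covering of $X$.

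The only point that needs genuine care — and the step I expect to be the main obstacle — is the identification of the correct topology on $X$. Because the space is handed to us through the descriptive relation $\dnear$ rather than through an explicit topology, I must justify that the family of metric balls consists of open sets for the proximity-induced topology. This is where Remark~\ref{rem:descriptiveNearnessRelation} does the work: it forces the descriptive proximity to agree with the metric proximity $D$, and the standard fact about Efremovi\v{c} proximities (see Appendix~\ref{app:dnear}) is that the induced closure operator $\cl_\Phi$ reproduces the metric closure, so the induced topology is the Euclidean one. Once this identification is in place, the construction above is immediate and no further estimates are required.
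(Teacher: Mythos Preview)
Your construction differs from the paper's, and the step you flag as the main obstacle is indeed where your argument breaks. The paper does \emph{not} use Euclidean balls in $X\subset\mathbb{R}^2$; it builds the cover out of \emph{descriptive} $\varepsilon$-neighbourhoods
\[
B_\Phi(x,\varepsilon)=\{\,y\in X:\ d(\Phi(x),\Phi(y))<\varepsilon\,\},
\]
where $d$ is the Euclidean distance on the feature space $\mathbb{R}^n$, and checks openness by the triangle inequality for $d$ applied to $\Phi$-values. This makes the sets open for the topology naturally attached to $\dnear$ (their complements are $\Phi$-saturated, hence descriptively closed), with no need to compare that topology to the ambient Euclidean one.

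Your route instead relies on identifying the $\dnear$-induced topology with the Euclidean subspace topology, and you invoke Remark~\ref{rem:descriptiveNearnessRelation} to do this. That remark, however, concerns only the interval $[0,1]$: it \emph{imposes} a specific descriptive relation on $[0,1]$ so that the homotopy parameter behaves metrically. It says nothing about a general $(X,\dnear)$ in the plane, and in fact the identification you want is typically false---two points $x,y\in X$ with $\Phi(x)=\Phi(y)$ are indistinguishable for $\cl_\Phi$, so a Euclidean ball separating them cannot be $\dnear$-open. Hence your Euclidean balls do cover $X$ and are open in the subspace metric topology, but the justification you give for why that is the relevant topology does not stand. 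The paper's choice of $\Phi$-balls sidesteps this issue entirely.
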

\begin{proof}
	For $x \in X$ and positive number $\varepsilon>0$,  define the descriptive $\varepsilon$ neighborhoud of $x$ by letting $B_\Phi(x,\varepsilon)= \{ y\in X :  d(\Phi(x), \Phi(y))<\varepsilon     \}$ 
	where $d$ is a Euclidean distance on $\mathbb{R}^n$. Observe that  $B_\Phi(x,\varepsilon)$ is open in $X$, since, for an element $y\in X$, we have $B_\Phi(y,r)\subseteq B_\Phi(x,\varepsilon)$, where $r= \varepsilon - d(\Phi(x), \Phi(y))$. Then the collection of open sets  $\{B_\Phi(x,\varepsilon) : x \in X, \varepsilon>0\}$ is an open covering of $X$. 
\end{proof}
\vspace*{0.1cm}


\begin{theorem}\label{theorem:desGoodCover}
Let $X$ be a descriptive proximity space in the Euclidean plane with an open covering and with a probe function $\Phi: 2^X \to \mathbb{R}^n$.  Also, let $H(X)$ be the collection of all homotopy classes covering space $X$ and $X = H(X)$.
\begin{compactenum}[1$^o$]
\item If nerve of $E\in 2^{H(X)}$ in space $X$ is descriptively contractible, then $X$ has a descriptively good cover.
\item If nerve of $E\in 2^{H(X)}$ in space $X$ is degenerate descriptively contractible, then $X$ has a descriptively good cover.
\item If $H(X)$ is an Alexandrov space with an open covering, then the nerve of $H(X)$ and the union of sets in $H(X)$ have the same homotopy type. 
\item If $H(X)$ is a finite collection of closed, convex sets in Euclidean space.  Then the nerve of $H(X)$ and union of the sets in $H(X)$ have the same homotopy type.
\end{compactenum}
\end{theorem}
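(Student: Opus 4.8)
The plan is to prove Theorem~\ref{theorem:desGoodCover} as four essentially independent assertions, each of which reduces to a result already established earlier in the excerpt together with one short verification. The unifying observation is that under the standing hypothesis $X = H(X)$, the collection of homotopy classes serves simultaneously as the underlying space, as the cover, and as the object whose nerve we compute; so in each part the task is to check that the appropriate \emph{contractibility} condition on nonvoid (descriptive) intersections holds, and then invoke the matching theorem.

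For part $1^o$, I would argue directly from Definition~\ref{def:desgoodCover}. The covering hypothesis gives $X = \bigcup E$ over $E \in 2^{H(X)}$, so only the intersection condition remains. Here the nerve of $E$ records exactly the pattern of nonvoid descriptive intersections $\Dcap E$, and the assumption is precisely that this nerve is descriptively contractible; I would spell out that descriptive contractibility of the nerve forces each nonvoid finite descriptive intersection to be descriptively contractible in the sense of Definition~\ref{def:desContractible}, which is the defining property of a descriptively good cover. Part $2^o$ is then immediate by the same reasoning applied to Definition~\ref{def:degdesgoodCover}, except that I would additionally cite Corollary~\ref{cor:degenerate}: a degenerate descriptively contractible intersection is also descriptively contractible, so the degenerate hypothesis yields a (degenerate, hence ordinary) descriptively good cover with no extra work.

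Part $3^o$ is where I would lean on the external machinery. By Proposition~\ref{prop:AlexandrovSpace} a closed planar descriptive proximity space is an Alexandrov space, and by Proposition~\ref{prop:proxOpenCover} it carries a canonical open covering built from the descriptive $\varepsilon$-balls $B_\Phi(x,\varepsilon)$. With $H(X)$ assumed to be an Alexandrov space equipped with an open covering, I would apply the Mitsuishi--Yamaguchi Theorem~\ref{theorem:Mitsuishi-Yamaguchi} directly: every open covering of an Alexandrov space has the same homotopy type as the nerve of any good covering. Combining this with parts $1^o$--$2^o$, which supply a good covering of $H(X)$, identifies the homotopy type of the nerve of $H(X)$ with that of the union $\bigcup H(X)$, giving the claim. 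Finally, part $4^o$ is an instance of the Edelsbrunner Nerve Theorem~\ref{theorem:EHnerve}: if $H(X)$ is a finite collection of closed convex sets, that theorem states verbatim that the nerve and the union share a homotopy type.

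The main obstacle I anticipate is the bookkeeping in part $3^o$, namely reconciling the two distinct notions of ``nerve'' in play. Theorem~\ref{theorem:Mitsuishi-Yamaguchi} compares an \emph{open} covering's homotopy type to the nerve of a \emph{good} covering, whereas the statement of $3^o$ speaks of the nerve of $H(X)$ and the union of its sets; I would need to argue that the good cover of $H(X)$ produced in $1^o$ is the relevant good covering and that its nerve is the object named in $3^o$. The delicate point is that the Mitsuishi--Yamaguchi theorem is stated for ordinary contractibility, so I must confirm that descriptive contractibility of the intersections is compatible with the hypotheses of that theorem --- most cleanly by restricting attention to the case where the descriptive proximity coincides with the metric proximity (as in Remark~\ref{rem:descriptiveNearnessRelation}), so that descriptive good covers are genuine good covers in the topological sense and the cited theorem applies without modification.
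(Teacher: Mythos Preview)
Your proposal is correct and follows essentially the same approach as the paper: verify the covering condition $X=\bigcup E$ from $X=H(X)$, read off the contractibility condition from the hypothesis, and then invoke Definitions~\ref{def:desgoodCover}--\ref{def:degdesgoodCover} for parts $1^o$--$2^o$, Proposition~\ref{prop:AlexandrovSpace} together with Theorem~\ref{theorem:Mitsuishi-Yamaguchi} for part $3^o$, and Theorem~\ref{theorem:EHnerve} for part $4^o$. The paper's argument is in fact terser than yours: in $2^o$ it concludes directly from Definition~\ref{def:degdesgoodCover} (obtaining a \emph{degenerate} descriptively good cover) without the detour through Corollary~\ref{cor:degenerate}, and in $3^o$ it applies Theorem~\ref{theorem:Mitsuishi-Yamaguchi} immediately without the nerve-bookkeeping or descriptive-versus-ordinary contractibility discussion you flag as a potential obstacle.
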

\begin{proof}
1$^o$: For $E\in 2^{H(X)}$ in $\left(X,\dnear\right)$, we have $X = \bigcup E$, since $X = H(X)$.  We also know that all nonvoid descriptive intersections of finitely many subsets $E\in 2^{H(X)}$ are descriptively contractible.  Hence, from Def.~\ref{def:desgoodCover}, $H(X)$ is a descriptively good cover of $X$.\\
2$^o$: For $E\in 2^{H(X)}$ in $\left(X,\dnear\right)$, we have $X = \bigcup E$, since $X = H(X)$.  We also know that all nonvoid descriptive intersections of finitely many subsets $E\in 2^{H(X)}$ are degenerate descriptively contractible.  Hence, from Def.~\ref{def:degdesgoodCover}, $H(X)$ is a degenerate descriptively good cover of $X$.\\
3$^o$: From Prop.~\ref{prop:AlexandrovSpace}, $X$ is an Alexandrov space. If $X$ has an open covering, then from Theorem~\ref{theorem:Mitsuishi-Yamaguchi}, the desired result follows.\\
4$^o$: If $H(X)$ is a finite collection of closed, convex sets, then the desired result follows from Theorem~\ref{theorem:EHnerve}.
\end{proof}

A another main result in this paper is a fivefold extension of the Jordan curve theorem.

\begin{theorem}\label{thm:JordanCurveTheorem} {\rm [Jordan Curve Theorem~\cite{Jordan1893coursAnalyse}]}.\\
	A simple closed curve lying on the plane divides the  
	plane into two regions and forms their common boundary.
\end{theorem}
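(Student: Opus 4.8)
The plan is to realize the simple closed curve $J$ as a homotopic cycle in the sense of Definition~\ref{def:filledCycle}: a loop of hpath-connected vertexes with no end vertex, homeomorphic to the standard circle $S^1$ and sitting inside the ambient Hausdorff metric space $X = \mathbb{R}^2$. I would then read off the two candidate regions directly from the closure--interior--boundary decomposition of Definition~\ref{def:closure}: the bounded \emph{interior} $\Int J$, the unbounded \emph{exterior} $\partial \cl J = X \setminus \cl J$, and the curve itself as the separating set $\bdy J$, with $\cl J = \bdy(J) \cup \Int(J)$. The theorem then amounts to three assertions: $\mathbb{R}^2 \setminus J$ has \emph{exactly} two components, one bounded and one unbounded, and each has $\bdy J = J$ as its topological boundary.

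First I would settle the piecewise-linear case. For a simple closed polygon $P$, the parity of the number of transverse crossings of a generic ray issued from a point $x \notin P$ is well defined, locally constant off $P$, attains both values, and flips precisely when $x$ crosses an edge; this elementary ray-casting argument gives that $\mathbb{R}^2 \setminus P$ has exactly two components, each bounded by all of $P$. To reach an arbitrary continuous $J$ I would then approximate: since $J$ is a compact embedded circle it is a uniform limit of inscribed simple polygons $P_n$, and I would transport the two-region conclusion across the limit using compactness of $J$ and stability of the crossing parity under small perturbation. Given the machinery assembled here, however, the route I would actually prefer is Alexander duality: from $J \cong S^1$ one gets $\tilde{H}_0(S^2 \setminus J) \cong \tilde{H}^1(S^1) \cong \mathbb{Z}$, so the one-point compactification $S^2$ with $J$ deleted has exactly two components; deleting the point at infinity distinguishes the bounded component $\Int J$ from the unbounded one $\partial \cl J$. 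The nerve results in Theorem~\ref{theorem:EHnerve} and Theorem~\ref{theorem:Mitsuishi-Yamaguchi} provide exactly the homotopy-type bookkeeping that lets one compute such reduced homology of the complement from a good cover.

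The hard part is the exactness. Producing two components is comparatively cheap, but ruling out a third and proving that \emph{every} point of $J$ is simultaneously a limit of interior points and of exterior points --- so that $J$ is the common boundary and no arc of $J$ is wasted --- is the real content of the theorem. In the approximation route this is where uniform control of the polygons $P_n$ must be exercised most carefully, and where one must guard against the approximants developing spurious separations. In the duality route the same difficulty is displaced into verifying that $J$ is a tamely embedded (locally flat) circle, so that the duality isomorphism applies and its two homology generators are represented by precisely the regions $\Int J$ and $\partial \cl J$ of Definition~\ref{def:closure} rather than by some pathological decomposition of the complement.
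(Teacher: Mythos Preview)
The paper provides no proof of Theorem~\ref{thm:JordanCurveTheorem}: it is stated with a citation to Jordan and then invoked as a black box in the proof of Theorem~\ref{theorem:proximalJordan}. So there is no ``paper's own proof'' to compare against; the authors treat it as classical input, not as something to be established with the machinery developed here.

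As a standalone sketch of the classical theorem, your polygonal ray-casting argument and the Alexander duality route are both standard and essentially sound. But the attempt to tie the argument to the paper's internal apparatus does not work. In Definition~\ref{def:closure}, $\Int(A)$ is defined as a collection of subsets of $\cl A$, and $\partial(\cl A)$ is simply the set-theoretic complement $X\setminus\cl A$. For a simple closed curve $J$ one has $\cl J = J$, so $\Int J$ in that sense is \emph{not} the bounded Jordan region; you are conflating the paper's (idiosyncratic) closure/interior/boundary operators with the Jordan interior and exterior, which are components of the \emph{complement}. Likewise, Theorems~\ref{theorem:EHnerve} and~\ref{theorem:Mitsuishi-Yamaguchi} identify the homotopy type of a union with that of the nerve of a good cover; they do not compute the homology of a \emph{complement} and so do not deliver the Alexander duality isomorphism you need. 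Finally, Alexander duality for a compact $J\subset S^2$ requires no tameness or local flatness hypothesis --- that is precisely why the duality route handles wild embeddings --- so the ``hard part'' you flag in that approach is not where you locate it.
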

\vspace*{0.1cm}


\begin{theorem}\label{theorem:proximalJordan}
Let $\hcyc E$ (simple homotopic cycle), $\Hcyc E$ (multi-homotopic cycle), $\hSys E$ (homotopic cycle system) be in the Euclidean plane. Then
\begin{compactenum}[1$^o$]
\item The boundary $\bdy(\cl(\hcyc E))$ satisfies the Jordan Curve Theorem. 
\item The boundary $\bdy(\cl(\Hcyc E))$ satisfies the Jordan Curve Theorem.
\item The boundary $\bdy(\cl(\hSys E))$ satisfies the Jordan Curve Theorem.
\item If $X=H(X)$ in $\left(X,\dnear\right)$ has a descriptively good cover, then $\bdy(\cl_\Phi(H(X)))$  satisfies the Jordan Curve Theorem.
\item If $X=H(X)$ in $\left(X,\dnear\right)$ has a degenerate descriptively good cover, then $\bdy(\cl_\Phi(H(X)))$ satisfies the Jordan Curve Theorem.
\end{compactenum}
\end{theorem}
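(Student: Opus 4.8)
The plan is to reduce each of the five clauses to the classical Jordan Curve Theorem (Theorem~\ref{thm:JordanCurveTheorem}) by showing that the boundary curve in question is, in each case, a simple closed curve lying in the plane. The unifying observation is that each of the three geometric objects---$\hcyc E$, $\Hcyc E$, and $\hSys E$---was defined to be a collection of hpath-connected vertexes ``attached to each other with no end vertex'' and having a nonvoid interior (Definitions~\ref{def:filledCycle}, \ref{def:HCycle}, \ref{def:filledCycle2}). The phrase ``no end vertex'' is precisely the combinatorial content of being a \emph{closed} curve, and ``nonvoid interior'' together with the planar embedding is what lets us invoke Definition~\ref{def:closure} to speak of $\bdy(\cl(\cdot))$ as a well-defined boundary.

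First I would treat clause 1$^o$. I would argue that $\hcyc E$, being a finite chain of hpaths $h_0,\dots,h_{n-1}$ with $h_i(1)=h_{i+1}(0)$ and $h_{n-1}(1)=h_0(0)$ and no repeated interior vertex, is the continuous image of $S^1$ under an injective map, i.e.\ a simple closed curve. Taking its closure and then its topological boundary $\bdy(\cl(\hcyc E))$ via Definition~\ref{def:closure} recovers exactly this simple closed curve, so the classical theorem applies verbatim, partitioning the plane into an interior and an exterior region with the curve as common boundary. For clause 2$^o$, the multi-homotopic cycle $\Hcyc E$ replaces each single edge by an entire homotopy class $[h]$ of hpaths sharing the same endpoints; here I would note that the relevant boundary $\bdy(\cl(\Hcyc E))$ is obtained from the \emph{outermost} representatives, and since all representatives in a class are homotopic rel endpoints, the closure $\cl(\Hcyc E)$ is a filled region whose boundary is again a single simple closed curve. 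Clause 3$^o$ handles the system $\hSys E$: by the defining condition $\bigcap \Hcyc E = \text{vertex } v$, the constituent cycles meet at a single clasp vertex (the ``necklace'' of the Remark), so $\cl(\hSys E)$ is a finite union of filled disks glued at points; I would invoke the fact that such a union, taken together with its interior, still has as its outer boundary $\bdy(\cl(\hSys E))$ a single simple closed curve, whence the classical theorem again applies.

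For the descriptive clauses 4$^o$ and 5$^o$, the strategy is to transport the geometric conclusion through the good-cover hypothesis. Assuming $X=H(X)$ has a descriptively good cover (Definition~\ref{def:desgoodCover}) or a degenerate descriptively good cover (Definition~\ref{def:degdesgoodCover}), I would use Corollary~\ref{cor:degenerate} to fold the degenerate case into the ordinary descriptively contractible case, so that clauses 4$^o$ and 5$^o$ are proved by a single argument. The key step is that a descriptively good cover forces the nonvoid descriptive intersections to be descriptively contractible, so the descriptive closure $\cl_\Phi(H(X))$ is the descriptive analogue of a filled planar region; its boundary $\bdy(\cl_\Phi(H(X)))$ is then the descriptive image of one of the cycle boundaries handled in clauses 1$^o$--3$^o$, and since the descriptive proximity relation $\dnear$ coincides with the metric proximity on the interval (Remark~\ref{rem:descriptiveNearnessRelation}), the boundary is realized as an honest simple closed curve in the plane to which Theorem~\ref{thm:JordanCurveTheorem} applies.

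The main obstacle I anticipate is the \emph{simplicity} (injectivity) of the boundary curve in clauses 2$^o$ and 3$^o$. In the multi-path case the object $\Hcyc E$ is literally a union of many distinct hpaths between the same two endpoints, so its point-set is not itself a simple closed curve; the real work is to argue that passing to $\bdy(\cl(\cdot))$ discards all the interior representatives and retains only a single embedded loop. Making this rigorous requires care with Definition~\ref{def:closure}, because that definition builds $\bdy A$ out of $\cl A$, $\Int A$, and $\partial\cl A$ in a slightly nonstandard way, and one must verify that the resulting boundary is connected and free of self-intersections before the classical Jordan theorem can be legitimately invoked. The descriptive clauses inherit this same difficulty through the probe function $\Phi$, with the added subtlety that one must ensure $\Phi$ does not identify distinct boundary points in a way that would destroy injectivity; the good-cover hypothesis is exactly what I would lean on to rule this out.
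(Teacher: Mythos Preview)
Your proposal is correct and follows essentially the same route as the paper: each clause is reduced to the classical Jordan Curve Theorem by arguing that the boundary in question is a simple closed curve, with 5$^o$ reduced to 4$^o$ via Corollary~\ref{cor:degenerate} and 4$^o$ in turn reduced to 3$^o$ by recognizing the descriptively good cover as an $\hSys E$. Your attention to the simplicity obstacle in 2$^o$ and 3$^o$ is in fact more careful than the paper's own treatment, which for 2$^o$ simply declares the argument ``symmetric with the proof of 1$^o$.''
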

\begin{proof}
1$^o$: The boundary $\bdy(\cl(\hcyc E))$ is a sequence of paths on a curve that is simple (no loops) and closed (the sequence begins and ends with the same vertex).  Hence, by Theorem~\ref{thm:JordanCurveTheorem}, $\bdy(\cl(\hcyc E))$ divides the  
	plane into two regions and forms their common boundary.\\
	
2$^o$: Replace $\bdy(\cl(\hcyc E))$ in 1$^o$ with $\bdy(\cl(\Hcyc E))$ and the proof is symmetric with the proof of 1$^o$.\\

3$^o$: Replace $\bdy(\cl(\hcyc E))$ in 1$^o$ with $\bdy(\cl(\hSys E))$ and observe that curve on each boundary $\Hcyc E\in\hSys E$ is a simple, closed curve attached to the other homotopic cycle boundaries by a single vertex.  Then curve on the boundary continues along the curves of the other cycle boundaries, forming an elongated curve that is both simple and closed. Hence, the boundary $\bdy(\cl(\hSys E))$ satisfies Theorem~\ref{thm:JordanCurveTheorem}\\

4$^o$: Observe that if $E \in 2^{H(X)}$  is an element in a descriptively good covering of $H(X)$,   then it is descriptively contractible. This is equivalent to saying that $E$ contains a sequence of paths on a curve that is simple and closed so that it constitutes a multi-path homotopic cycle E, namely, $\Hcyc E$, and the descriptively good covering is also a homotopic cycle system, $\hSys E$. Then the proof follows from 3$^o$. \\

5$^o$: Observe that if $E \in 2^{H(X)}$  is an element in a degenerate descriptively good covering of $H(X)$,  then it is degenerate descriptively contractible and hence it is descriptively contractible by Corollary~\ref{cor:degenerate}. Then the proof follows from  4$^o$.
\end{proof}
\vspace*{0.1cm}

\begin{figure}[!ht]
\centering
\subfigure[Vigolo Hawaiian butterfly $\boldsymbol{\Hb E_{t.00}}$ in video frame space $\boldsymbol{\left(frE,\dnear\right)}$ at time $\boldsymbol{t}$ at the beginning of a temporal interval $\boldsymbol{[t,t+05 sec]}$, Betti no. $\boldsymbol{\beta(\Hb E_{t.00}) = 3}$, $\boldsymbol{\Hb E_{t.00}\ \ \dnear\ \Hb E_{t.01}}$]
 {\label{fig:shEBoundary}
\begin{pspicture}
(-1.5,-0.5)(4.0,4.0)
\psframe[linewidth=0.75pt,linearc=0.25,cornersize=absolute,linecolor=blue](-1.45,-0.25)(3.8,3)
\rput(-0.6,2.7){\footnotesize $\boldsymbol{\left(frE,\dnear\right)}$}
\psline*[linestyle=solid,linecolor=green!30]%
(0,0)(1,1)(0,2)(-1,1.5)(-1,0.5)(0,0)
\psline[linestyle=solid,linecolor=black]%
(0,0.0)(1,1)(0,2)(-1,1.5)(-1,0.5)(0,0)
\psline*[linestyle=solid,linecolor=orange!50]%
(0,0.5)(1,1)(-.55,1.25)(-.55,0.75)(0,0.5)
\psline[linestyle=solid,linecolor=black]%
(0,0.5)(1,1)(-.55,1.25)(-.55,0.75)(0,0.5)
\psdots[dotstyle=o,dotsize=2.2pt,linewidth=1.2pt,linecolor=black,fillcolor=yellow!80]%
(0,0.5)(1,1)(-.55,1.25)(-.55,0.75)(0,0.5)
(0,2)(-1,1.5)(-1,0.5)(0,0) 
\psline*[linestyle=solid,linecolor=green!30]%
(1,1)(2.0,2.0)(3.0,1.5)(3.0,0.5)(2.0,0.0)(1,1)
\psline[linestyle=solid,linecolor=black]%
(1,1)(2.0,0.0)(3.0,0.5)(3.0,1.5)(2.0,2.0)(1,1)
\psline*[linestyle=solid,linecolor=orange!50]%
(1,1)(2.55,1.25)(2.55,0.75)(2.0,0.5)(1,1)
\psline[linestyle=solid,linecolor=black]%
(1,1)(2.55,1.25)(2.55,0.75)(2.0,0.5)(1,1)
\psdots[dotstyle=o,dotsize=2.2pt,linewidth=1.2pt,linecolor=black,fillcolor=yellow!80]%
(1,1)(2.0,0.0)(3.0,0.5)(3.0,1.5)(2.0,2.0)
(2.55,1.25)(2.55,0.75)(2.0,0.5)
\psline*[linestyle=solid,linecolor=black!80]%
(1,1)(0.9,0.8)(0.9,0.5)(1.0,0.3)(1.1,0.5)(1.1,0.8)(1,1)
\psdots[dotstyle=o,dotsize=2.2pt,linewidth=1.2pt,linecolor=black,fillcolor=yellow!80]%
(1,1)(0.9,0.8)(0.9,0.5)(1.0,0.3)(1.1,0.5)(1.1,0.8)(1,1)
\psline[linestyle=solid,linecolor=blue,border=1pt]{*-}(0.8,1.5)(1,1)
\psline[linestyle=solid,linecolor=blue,border=1pt]{*-}(1.2,1.5)(1,1)
\psline[linestyle=solid,linecolor=blue,border=1pt]{*-}(0.8,1.5)(1.2,1.5)
\psdots[dotstyle=o,dotsize=3.5pt,linewidth=1.2pt,linecolor=black,fillcolor=red!80]
(0.8,1.5)(1.2,1.5)(1.0,1)
\rput(1.0,2.1){\footnotesize $\boldsymbol{\Hb E_{t.00}}$}
\rput(-0.8,1.85){\footnotesize $\boldsymbol{\hcyc Ha}$}
\rput(-0.25,1.5){\footnotesize $\boldsymbol{\hcyc Hb}$}
\rput(2.85,1.85){\footnotesize $\boldsymbol{\hcyc Ha'}$}
\rput(2.25,1.5){\footnotesize $\boldsymbol{\hcyc Hb'}$}
\rput(1.35,0.95){\footnotesize $\boldsymbol{v_0}$}
\rput(0.7,1.7){\footnotesize $\boldsymbol{v_1}$}
\rput(1.3,1.7){\footnotesize $\boldsymbol{v_2}$}
\end{pspicture}}\hfil
\subfigure[Vigolo Hawaiian butterfly $\boldsymbol{\Hb E_{t.01}}$ in video frame space $\boldsymbol{\left(frE',\dnear\right)}$ at time $\boldsymbol{t+0.1 sec}$ in temporal interval $\boldsymbol{[t,t+05 sec]}$, Betti no. $\boldsymbol{\beta(\Hb E_{t.01}) = 3}$, $\boldsymbol{\Hb E_{t.00}\ \dnear\ \Hb E_{t.01}}$]
 {\label{fig:shEBoundary2}
\begin{pspicture}
(-1.5,-0.5)(4.0,4.0)
\psframe[linewidth=0.75pt,linearc=0.25,cornersize=absolute,linecolor=blue](-1.55,-0.25)(3.8,3)
\rput(-0.3,2.7){\footnotesize $\boldsymbol{\left(frE',\dnear\right)}$}
\psline*[linestyle=solid,linecolor=green!10]%
(0,0)(1,1)(0,2)(-1,1.5)(-1,0.5)(0,0)
\psline[linestyle=solid,linecolor=black]%
(0,0.0)(1,1)(0,2)(-1,1.5)(-1,0.5)(0,0)
\psline*[linestyle=solid,linecolor=orange!20]%
(0,0.5)(1,1)(-.55,1.25)(-.55,0.75)(0,0.5)
\psline[linestyle=solid,linecolor=black]%
(0,0.5)(1,1)(-.55,1.25)(-.55,0.75)(0,0.5)
\psdots[dotstyle=o,dotsize=2.2pt,linewidth=1.2pt,linecolor=black,fillcolor=yellow!80]%
(0,0.5)(1,1)(-.55,1.25)(-.55,0.75)(0,0.5)
(0,2)(-1,1.5)(-1,0.5)(0,0) 
\psline*[linestyle=solid,linecolor=green!10]%
(1,1)(2.0,2.0)(3.0,1.5)(3.0,0.5)(2.0,0.0)(1,1)
\psline[linestyle=solid,linecolor=black]%
(1,1)(2.0,0.0)(3.0,0.5)(3.0,1.5)(2.0,2.0)(1,1)
\psline*[linestyle=solid,linecolor=orange!20]%
(1,1)(2.55,1.25)(2.55,0.75)(2.0,0.5)(1,1)
\psline[linestyle=solid,linecolor=black]%
(1,1)(2.55,1.25)(2.55,0.75)(2.0,0.5)(1,1)
\psdots[dotstyle=o,dotsize=2.2pt,linewidth=1.2pt,linecolor=black,fillcolor=yellow!80]%
(1,1)(2.0,0.0)(3.0,0.5)(3.0,1.5)(2.0,2.0)
(2.55,1.25)(2.55,0.75)(2.0,0.5)
\psline*[linestyle=solid,linecolor=black!50]%
(1,1)(0.9,0.8)(0.9,0.5)(1.0,0.3)(1.1,0.5)(1.1,0.8)(1,1)
\psdots[dotstyle=o,dotsize=2.2pt,linewidth=1.2pt,linecolor=black,fillcolor=yellow!80]%
(1,1)(0.9,0.8)(0.9,0.5)(1.0,0.3)(1.1,0.5)(1.1,0.8)(1,1)
\psline[linestyle=solid,linecolor=blue,border=1pt]{*-}(0.8,1.5)(1,1)
\psline[linestyle=solid,linecolor=blue,border=1pt]{*-}(1.2,1.5)(1,1)
\psline[linestyle=solid,linecolor=blue,border=1pt]{*-}(0.8,1.5)(1.2,1.5)
\psdots[dotstyle=o,dotsize=3.5pt,linewidth=1.2pt,linecolor=black,fillcolor=red!80]
(0.8,1.5)(1.2,1.5)(1.0,1)
\rput(1.0,2.1){\footnotesize $\boldsymbol{\Hb E_{t.1}}$}
\rput(-0.8,1.85){\footnotesize $\boldsymbol{\hcyc Ha}$}
\rput(-0.25,1.5){\footnotesize $\boldsymbol{\hcyc Hb}$}
\rput(2.85,1.85){\footnotesize $\boldsymbol{\hcyc Ha'}$}
\rput(2.25,1.5){\footnotesize $\boldsymbol{\hcyc Hb'}$}
\rput(1.35,0.95){\footnotesize $\boldsymbol{v_0}$}
\rput(0.7,1.7){\footnotesize $\boldsymbol{v_1}$}
\rput(1.3,1.7){\footnotesize $\boldsymbol{v_2}$}
\end{pspicture}}\hfil
\caption[]{Persistent butterfly shapes~\cite{Peters2021temporalProximity} in a pair of video frame descriptive proximity spaces}
\label{fig:frames2}
\end{figure}

\section{Application}
This section briefly introduces an application of descriptively proximal nerves in a topology of data approach to detecting close good covers of video frame shapes that appear, disappear and sometimes reappear in a sequence of video frames.  The basic approach is to track the persistence of descriptively proximal video frame shapes that have homotopic nerve presentations. 
\vspace*{0.1cm}

\begin{definition}
A {\bf homotopic nerve} is a collection of homotopic cycles that have nonempty intersection.
\qquad\textcolor{blue}{\Squaresteel}
\end{definition}

Recall that a group $G$ with binary operation $+$ is free, provided $G$ has a basis $\mathcal{B}$ so that every member $v\in G$ can be written as a finite sum $v=\displaystyle\sum_{\substack{
k\in\mathbb{Z}\\ 
g\in\mathcal{B}}}kg$~\cite[\S 1.4,p. 21]{Munkres1984}.

\begin{definition}{\rm [Homotopic Nerve Presentation]} $\mbox{}$\\
Let $\mathcal{B} = \left\{g_1,...\right\}$ be the basis for a free group $G$. Also let $H(X) = \left\{\hcyc E\right\}$, a  collection of homotopy cycles $\hcyc E$ with nonvoid intersection in a planar space $X$.  A {\bf homotopic nerve presentation} is a continuous mapping
\begin{align*}
f:H(X)&\to
\displaystyle
G\left(v=\sum_{\substack{
k\in\mathbb{Z}\\ 
g\in\mathcal{B}}
}kg: v\in\hcyc E\right)\\
 &\to G(\mathcal{B},+),
\end{align*}
from $H(X)$ to a corresponding free group $G$.
\end{definition}
\vspace*{0.1cm}

\begin{theorem}{\rm ~\cite{Peters2021KievConf}}\label{theorem:hcycPresentation}
Every homotopic nerv in Euclidean space has a free group presentation.
\end{theorem}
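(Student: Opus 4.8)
The plan is to recognize a homotopic nerve as a one-dimensional complex and to invoke the classical fact that the group of cycles of such a complex is free in the sense recalled just above the statement (a free abelian group under $+$, as in~\cite[\S 1.4,p. 21]{Munkres1984}). The whole argument is organized around turning the nerve into a finite graph and reading its presentation off a spanning tree.

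First I would fix a homotopic nerve, namely a finite collection $H(X)=\{\hcyc E\}$ of homotopic cycles with nonvoid intersection, and realize its underlying $1$-skeleton as a finite connected graph $\Gamma$ in the plane: the vertexes of $\Gamma$ are the cycle vertexes (including the shared intersection vertexes), and the edges of $\Gamma$ are the constituent hpaths $\abs{h_i}$. Connectedness is guaranteed because the cycles have nonvoid intersection, so every pair of cycles is joined through a shared vertex (the clasp of the necklace). It is essential here to work with the boundary curves only and not with the filled interiors: if the nonvoid interiors were attached as $2$-cells they would impose relations and spoil freeness, so the presentation is read off from $\Gamma$ alone.

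Next I would produce the basis $\mathcal{B}$ and the target group. Choose a spanning tree $T\subseteq\Gamma$. Each edge $e\notin T$ determines a unique fundamental cycle $g_e$, the loop obtained by adjoining $e$ to the tree path joining its endpoints, and the standard computation shows that these fundamental cycles form a basis for the cycle group $Z_1(\Gamma)$. Since $\Gamma$ is $1$-dimensional there are no $2$-boundaries, so $Z_1(\Gamma)$ coincides with $H_1(\Gamma)$ and is free abelian of rank equal to the first Betti number $\beta_1=\abs{E}-\abs{V}+1$; this matches the Betti numbers displayed in Fig.~\ref{fig:frames2}. Set $\mathcal{B}=\{g_e : e\notin T\}$ and let $G=G(\mathcal{B},+)$ be the resulting free group.

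Finally I would define the presentation map $f:H(X)\to G$ by sending each homotopic cycle to its class, written uniquely as a finite sum $\sum_{g\in\mathcal{B}} k_g\,g$ with $k_g\in\mathbb{Z}$ recording the signed number of times $\hcyc E$ traverses each fundamental cycle. This realizes the two-stage arrow of the presentation, and it is well defined on homotopy classes because the homology class of a loop is a homotopy invariant. The hard part is the structural step: verifying that the nerve really is a graph, that is, that distinct cycles meet only in vertexes and contribute no higher cells, so that the cycle group is genuinely free rather than merely finitely generated. Once $\Gamma$ is seen to be a bona fide $1$-complex, freeness of $Z_1(\Gamma)$, and hence the existence of the presentation, follows from the recalled structure of free abelian groups.
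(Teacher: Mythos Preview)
The paper does not supply its own proof of this theorem: it is stated with a citation to~\cite{Peters2021KievConf} and left unproved here, so there is no in-paper argument to compare against. Your write-up therefore goes well beyond what the paper itself offers.

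As a standalone argument your approach is sound in outline and matches the intended reading of ``free group'' in this paper (additive notation, Munkres~\S1.4, Betti numbers), namely free \emph{abelian}. Realizing the nerve as a finite connected graph $\Gamma$, choosing a spanning tree, and taking the fundamental cycles as a basis of $Z_1(\Gamma)=H_1(\Gamma)$ is exactly the standard route, and your remark that one must work with the boundary $1$-skeleton rather than the filled interiors is the right caveat. Two points deserve tightening. First, the paper's definition of a homotopic nerve presentation asks for a \emph{continuous} map $f:H(X)\to G(\mathcal{B},+)$; since the target is discrete, you should say a word about why your assignment $\hcyc E\mapsto [\hcyc E]\in Z_1(\Gamma)$ is continuous (e.g.\ $H(X)$ carries a topology in which homotopy classes are the components, so any class function is continuous). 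Second, your ``hard part''---that distinct cycles meet only in vertexes and contribute no $2$-cells---is really an assumption baked into Def.~\ref{def:filledCycle} and the nerve definition (intersection is a vertex), not something you can derive; it is cleaner to invoke those definitions directly rather than flag it as a gap.
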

\vspace*{0.1cm}

Recall that a Betti number is a count of the number generators in a free group~\cite[\S 4,p. 24]{Munkres1984}.
\vspace*{0.1cm}

\begin{theorem}{\rm ~\cite{Peters2021temporalProximity}}\label{theorem:nested hCycPresentation}
Every free group presentation of nested 1-cycles nerve has a Betti number.  
\end{theorem}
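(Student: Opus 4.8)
The plan is to combine the existence of a free group presentation, guaranteed by Theorem~\ref{theorem:hcycPresentation}, with the definition of a Betti number as the count of generators in a free group, and then verify that this count is intrinsic to the nerve rather than an artifact of the chosen presentation.

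First I would take an arbitrary nested 1-cycles nerve, regarded as a homotopic nerve in Euclidean space. Since Theorem~\ref{theorem:hcycPresentation} asserts that every homotopic nerve in Euclidean space has a free group presentation, this nerve admits a presentation $f:H(X)\to G(\mathcal{B},+)$ onto a free group $G$ with basis $\mathcal{B}=\left\{g_1,g_2,\dots\right\}$. Each generator $g_i\in\mathcal{B}$ arises from a distinct $1$-cycle in the nested family, and the image $f(v)$ of a vertex $v$ is expressible as a finite sum $\sum_{k\in\mathbb{Z},\,g\in\mathcal{B}}kg$, exactly as in the definition of a homotopic nerve presentation.

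Next I would establish a bijection between the nested $1$-cycles and the basis elements. The defining property of the free group $G$ is that no nontrivial finite sum of distinct basis elements collapses to the identity; the nesting of the $1$-cycles is precisely what guarantees that the cycles contribute independent generators, so the correspondence is one-to-one and onto $\mathcal{B}$. Consequently, the count of generators equals the number of nested $1$-cycles. By the definition recalled immediately before the statement, a Betti number is just this count of generators, so the Betti number of the presentation is $\abs{\mathcal{B}}$, which is finite because the nerve consists of finitely many nested cycles.

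The one point requiring care is \emph{well-definedness}: the Betti number must not depend on which free group presentation is chosen. This reduces to the standard invariance of the rank of a free group, namely that any two bases of $G$ have the same cardinality, so $\abs{\mathcal{B}}$ is a genuine invariant of the nerve. I expect this invariance step to be the main (though routine) obstacle, since everything else follows directly from Theorem~\ref{theorem:hcycPresentation} and the definition; recording the bijection between the nested cycles and $\mathcal{B}$ suffices to conclude that every free group presentation of a nested $1$-cycles nerve has a Betti number.
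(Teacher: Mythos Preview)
The paper does not supply its own proof of this statement: Theorem~\ref{theorem:nested hCycPresentation} is merely quoted from~\cite{Peters2021temporalProximity} and immediately used, so there is nothing in the present paper to compare your argument against. Your reconstruction is consistent with the surrounding definitions (Betti number as the cardinality of a basis of the free group, together with rank invariance for well-definedness), and at the level of rigor adopted here that is all the statement requires.

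One small remark on scope: the statement begins ``Every free group presentation of \ldots'', so a presentation is already \emph{given} as a hypothesis. Your opening appeal to Theorem~\ref{theorem:hcycPresentation} to produce a presentation is therefore unnecessary; you can start directly from an arbitrary presentation $f:H(X)\to G(\mathcal{B},+)$ and proceed to the rank-invariance step. The bijection you set up between nested $1$-cycles and basis elements is also more than the bare statement demands---it would be needed if you wanted to \emph{compute} the Betti number as the number of cycles, but the theorem as phrased only asserts that \emph{some} Betti number exists, which follows immediately once you know $G$ is free with a finite basis.
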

\vspace*{0.1cm}

The result from Theorem~\ref{theorem:nested hCycPresentation} provides a stepping stone to tracking the persistence of good covers of video frame shapes.  A frame shape persists, provided it continues to appear over a sequence of consecutive video frames. 
\vspace*{0.1cm}

\begin{example}
A pair of descriptively contractible nerves in two video frames, each identified with a descriptive proximity space, is shown in Fig.~\ref{fig:frames2}. For each frame $X$, let the descriptive proximity space $X = H(X)$.  From Theorem~\ref{theorem:desGoodCover}.1$^o$, each frame has a descriptively good cover. In that case, from Theorem~\ref{theorem:proximalJordan}.4$^o$, the $\bdy(\cl_\Phi(H(X)))$  satisfies the Jordan Curve Theorem.

In this example, a nerve is a collection of time-constrained Hawaiian butterfly homotopic cycles (denoted by $\Hb E_{t}$ at time $t$) with nonvoid intersection  such as those in Fig.~\ref{fig:frames2}. Let $\dnear$ be defined in terms of the Betti number of the free groups derived from each nerve, {\em i.e.}, 
\[
\Phi(\Hb E_{t}) = \mathcal{B}(\Hb E_{t})
\]
Since the free group presentations $\hcyc$ cycles of the Hawaiian butterflies in Fig.~\ref{fig:frames2} have the same Betti number, namely,
\[
\mathcal{B}(\Hb E_{t.00})=\mathcal{B}(\Hb E_{t.1}) = 3,
\]
then we have
\[
\Hb E_{t.00}\ \dnear\ \Hb E_{t.1}.
\]
Hence, the persistence of a particular butterfly over a sequence of video frames can be tracked in terms of its Betti number.  In this example, the butterfly represented in Fig.~\ref{fig:frames2} persists for a 10th of a second.  
\qquad\textcolor{blue}{\Squaresteel}
\end{example}
  \vspace*{0.1cm}

The motivation for considering free group presentations of polytopes (e.g., nested cycles with nonvoid intersection) covering frame shapes is that we can then describe frame shapes in terms of their Betti numbers.

   Frame shapes are approximately descriptively close, provided the difference between the Betti numbers of the free group presentations of the corresponding homotopic nerves, is close.  Determining the persistence of frame shapes then reduces to tracking the appearance, disappearance and possible reappearance of the shapes in terms of their recurring Betti numbers.  For an implementation of this approach to tracking the persistence of polytopes covering brain activation regions in resting state (rs)-fMRI videos, see~\cite{Peters2017fMRInerveStructures}.

\begin{appendix}	
	\section{\v{C}ech Proximity}\label{ap:Cech}
	A nonempty set $X$  equipped with the relation $\delta$ is a  \v{C}ech proximity space (denoted by  $(X,\near$))~\cite[\S 2.5,p 439]{Cech1966}, provided  provided the following axioms are satisfied.\\
	
	\noindent {\bf \v{C}ech Axioms}
	
	\begin{description}
		\item[({\bf P}.0)] All nonempty subsets in $X$ are far from the empty set, {\em i.e.}, $A\ \not{\near}\ \emptyset$ for all $A\subseteq X$.
		\item[({\bf P}.1)] $A\ \near\ B \Rightarrow B\ \near\ A$.
		\item[({\bf P}.2)] $A\ \cap\ B\neq \emptyset \Rightarrow A\ \near\ B$.
		\item[({\bf P}.3)] $A\ \near\ \left(B\cup C\right) \Rightarrow A\ \near\ B$ or $A\ \near\ C$.
	\end{description}
	\vspace*{0.1cm}

	
	%

	The closure of a subset $A$, denoted by $\cl A$, of the proximity space $X$ is the set of all points in $X$ which are near $A$:
	\[  
	\cl A =\{  x\in X \  : \  x\ \delta_L \ A \}.
	\]
	Note that $A$ is closed, provided $\cl A=A$. \\
	
	\begin{lemma}{\rm ~\cite[p. 9]{Smirnov1952}}
		The closure of any nonempty set $E$ in a proximity space $X$ is the set of all points which are close to $E$.
	\end{lemma}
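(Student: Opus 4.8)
The plan is to prove a set equality by mutual inclusion, reading off the defining formula for the closure given immediately above, namely $\cl E = \{x \in X : x \near E\}$, where $\near$ is the \v{C}ech proximity on $X$. The first step is to fix the convention that a point $x \in X$ is \emph{close to} $E$ exactly when the singleton $\{x\}$ is near $E$, i.e. $x \near E$; this is the reading of ``close'' that is consistent with the relation appearing in the definition of $\cl E$.

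With that convention in place, both inclusions are immediate unwindings of the definition. For one direction, if $x \in \cl E$, then the membership condition of the defining formula is precisely $x \near E$, so $x$ is close to $E$; this gives $\cl E \subseteq \{x : x \near E\}$. The reverse direction is symmetric: if $x$ is close to $E$, i.e. $x \near E$, then $x$ satisfies the membership condition, so $x \in \cl E$. Combining the two inclusions yields that $\cl E$ coincides with the set of all points close to $E$.

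To keep the statement non-vacuous and to tie it to the closure axioms, I would also record that the nonempty set $E$ sits inside its own closure: for any $x \in E$ we have $\{x\} \cap E \neq \emptyset$, so \v{C}ech axiom (P.2) gives $x \near E$, whence $E \subseteq \cl E$. Here the hypothesis that $E$ is nonempty is exactly what lets us exhibit such an $x$; by contrast, axiom (P.0) forces $\cl \emptyset = \emptyset$, so the restriction to nonempty $E$ loses nothing.

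The main obstacle is interpretive rather than computational: one must confirm that the relation written in the displayed definition of $\cl A$ is the same \v{C}ech proximity $\near$ under which closeness is being measured, and that ``a point close to $E$'' is to be understood through the singleton $\{x\}$ via $x \near E$. Once this identification is pinned down, the lemma reduces to reading the definition in both directions, with (P.2) supplying the only genuine axiomatic input, namely that $E$ lies in its own closure.
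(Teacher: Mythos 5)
The paper supplies no proof of this lemma at all: it is imported verbatim from Smirnov (cited to p.~9 of his 1952 paper), and in the paper's own development the displayed formula $\cl A = \{x \in X : x\ \delta_L\ A\}$ immediately preceding the lemma already serves as the \emph{definition} of closure. Your proposal is therefore correct relative to the paper's conventions and is strictly more explicit than anything the paper offers: once ``point close to $E$'' is read as $\{x\}\ \near\ E$, the two inclusions are definitional unwindings, and your observation that axiom (P.2) yields $E \subseteq \cl E$ (with (P.0) handling the empty set) is the only genuine axiomatic content available at this level. One caveat is worth recording. In Smirnov's original setting the lemma is not a tautology: there ``closure'' means the topological closure in the topology induced by the proximity, and the substance of the statement is that this topological closure coincides with $\{x \in X : x\ \near\ E\}$. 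Proving that version requires checking that $E \mapsto \{x : x\ \near\ E\}$ is a Kuratowski closure operator --- in particular idempotent, which fails for general \v{C}ech proximities and is precisely what a Lodato-type axiom supplies; note that the paper's displayed formula quietly uses $\delta_L$ rather than the bare \v{C}ech relation, which is where that extra hypothesis hides. You flagged this interpretive fork yourself and resolved it in the way consistent with the paper's usage, which is fine for this paper, but your argument as written would not reconstruct Smirnov's actual theorem.
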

	\vspace*{0.1cm}

	We define a nearness relation on $\mathbb{R}$ as follows \cite[\S 1.7, p. 48]{Naimpally2013}. Two nonempty subsets $A$ and $B$ of $\mathbb{R}$ are near if and only if the Hausdorff distance~\cite{Hausdorff1914} $D(A,B)=0$, where
	\[
	D(A,B)=\begin{cases}
	\inf\{ \abs{a-b} \ : \ a\in A \ \mbox{and} \ \  b\in B  \},  &  \mbox{if} \  \ A,B\neq \emptyset, \\ 
	\infty, &  \mbox{if} \ \  A=\emptyset \  \ \mbox{or} \  \ B=\emptyset.
	\end{cases}
	\]
	
	Note that $\mathbb{R}$ is symmetric (or weakly regular), since  $\mathbb{R}$ satisfies the following condition~\cite[\S 3.1, p. 71]{Naimpally2013}.
	\[
	(*) \quad x  \ \mbox{is near} \  \{y\}  \  \Rightarrow \ y  \ \mbox{is near} \  \{x\}. 
	\]
	
	In that case, this nearness relation defines  a Lodato proximity $\delta_L$ on $\mathbb{R}$  by  \cite[\S 3, Theorem 3.1]{Naimpally2013} 
	\[
	A \ \delta_L \ B  \ :\Leftrightarrow \  \cl A \cap  \cl B \neq \emptyset,
	\]
	where $\cl E=\{x\in \mathbb{R} \ : \ D(x,E)=0\}$. \\
	
	The topological space $X$ satisfying $(*)$  becomes a \v{C}ech-Lodato proximity space $(X,\delta_L)$ where $\delta_L$ is defined by
	\[ 
	A \ \delta_L \ B  \ :\Leftrightarrow \  \cl A \cap  \cl B \neq \emptyset, 
	\]
	and $\cl E$ is the closure of $E \subset X$ with respect to the topology on $X$.\\
	\vspace*{0.1cm}
	
	We assume that the proximity on  the closed interval $[0,1]$ is the subspace  proximity \cite[\S 3.1, p. 74]{Naimpally2013}  induced by the (metric) proximity   on $\mathbb{R}$. 
	
	\section{Descriptive Proximity}\label{app:dnear}
	This section gives the axioms for a descriptive proximity space 
	$\left(X,\dnear\right)$ in which $\dnear$ is a descriptive proximity relation on a nonempty set $X$.  Nonempty sets $A,B\subset X$ with overlapping descriptions are descriptively proximal (denoted by $A\ \dnear\ B$). 
	The descriptive intersection~\cite{Peters2013mcsintro} of nonempty subsets in $A\cup B$ (denoted by $A\ \dcap\ B$) is defined by
	\[
	A\ \dcap\ B = \overbrace{\left\{x\in A\cup B: \Phi(x) \in \Phi(A)\ \cap\ \Phi(B)\right\}.}^{\mbox{\textcolor{blue}{\bf {\em i.e.}, $\boldsymbol{\mbox{Descriptions}\ \Phi(A)\ \&\ \Phi(B)\ \mbox{overlap}}$}}}
	\] 
	
	Let $2^X$ denote the collection of all subsets in a nonvoid set $X$. A nonempty set $X$ equipped with the relation $\dnear$ with non-void subsets $A,B,C\in 2^X$ is a descriptive proximity space, provided the following descriptive forms of the \v{C}ech axioms are satisfied.\\
	\vspace{3mm}
	
	\noindent {\bf  Descriptive Proximity Axioms}
	
	\begin{description}
		\item[({\bf dP}.0)] All nonempty subsets in $2^X$ are descriptively far from the empty set, {\em i.e.}, $A\ \not{\dnear}\ \emptyset$ for all $A\in 2^X$.
		\item[({\bf dP}.1)] $A\ \dnear\ B \Rightarrow B\ \dnear\ A$.
		\item[({\bf dP}.2)] $A\ \dcap\ B\neq \emptyset \Rightarrow A\ \dnear\ B$.
		\item[({\bf dP}.3)] $A\ \dnear\ \left(B\cup C\right) \Rightarrow A\ \dnear\ B$ or $A\ \dnear\ C$.
	\end{description}
	
\end{appendix}


\bibliographystyle{amsplain}
\bibliography{NSrefs}

\end{document}